\documentclass[10pt,a4paper]{amsart}
\usepackage{amsmath,amsthm,amssymb,amscd,mathrsfs}
\usepackage[all]{xy}
\usepackage{tikz}
\usepackage{txfonts}
\usepackage{hyperref}
\hypersetup{colorlinks=true, linkcolor=black, citecolor=black}
\newtheorem{lemme}{Lemme}[section]

\newtheorem*{conjecture*}{Conjecture}

\newtheorem*{theorem*}{Theorem}
\newtheorem*{corollary*}{Corollary}
\newtheorem{lemma}[lemme]{Lemma}
\newtheorem{proposition}[lemme]{Proposition}
\newtheorem{remark}[lemme]{Remark}

\newtheorem{theorem}[lemme]{Theorem}

\newtheorem{corollary}[lemme]{Corollary}

\oddsidemargin=0cm
\evensidemargin=0cm
\baselineskip 18pt \textwidth 16cm \sloppy \theoremstyle{plain}

\newcommand{\Aut}{\operatorname{Aut}}
\newcommand{\End}{\operatorname{End}}
\newcommand{\Hom}{\operatorname{Hom}}
\newcommand{\Isom}{\operatorname{Isom}}
\renewcommand{\Im}{\operatorname{Im}}
\newcommand{\Ker}{\operatorname{Ker}}

\newcommand{\Ind}{\operatorname{Ind}}

\newcommand{\Res}{\operatorname{Res}}

\newcommand{\C}{\mathbb C}

\newcommand{\Gal}{\operatorname{Gal}}

\newcommand{\SL}{\operatorname{SL}}

\newcommand{\Sp}{\operatorname{Sp}}

\newcommand{\nnn}{\operatorname{N}}

\newcommand{\tr}{\operatorname{tr}}

\begin{document}

\title{Weil representations over finite fields and Shintani lift }
\date{March 2013}
\keywords{Weil representation, base change}

\author{Guy Henniart}
\address{Institut Universitaire de France et Universit\'e de Paris-Sud\\ Laboratoire de Math\'ematiques d'Orsay\\
Orsay Cedex, F-91405; CNRS, Orsay cedex\\ F-91405.}
\email{Guy.Henniart@math.u-psud.fr}

\author{  Chun-Hui Wang}
\address{NCMIS, Academy of Mathematics and Systems Science\\
Chinese Academy of Sciences\\
Beijing, 100190, P.R. China}
\email{cwang@amss.ac.cn}

\subjclass[2000]{11F27, 20C33  (Primary)}

\maketitle
\setcounter{tocdepth}{2}
\begin{abstract}
Let $\Sp_V(F)$ be the group of isometries of a symplectic vector space $V$ over a finite field $F$ of odd cardinality. The group $\Sp_V(F)$ possesses distinguished representations--- the Weil representations. We know that they are compatible with base change in the sense of Shintani for a finite extension $F'/F$. The result is also true for the group of similitudes of $V$.
\end{abstract}
\section{Introduction}\label{I}
    Let $F$ be a finite field of odd cardinality $q$, and let $\psi$ be a non-trivial character of $F$. Consider a symplectic vector space $V$ over $F$ of  finite dimension and write $\textbf{Sp}_V$ for its group of isometries seen as an algebraic group over $F$. To $\psi$ is attached a canonical class of representations of $\textbf{Sp}_V(F)$, the Weil representations $W_{\psi}$\cite{Gero}. Let $F'$ be a finite extension of $F$ with  Frobenius automorphism $\sigma$. In this paper, we establish the behavior of $W_{\psi}$ with respect to Shintani lifting   from $F$ to $F'$. We recall that there is a norm map $N$ yielding a bijection from the set of $\textbf{Sp}_V(F')$-conjugacy classes of $\sigma \ltimes \textbf{Sp}_V(F')$, a subset  of $\Gal(F'/F)\ltimes \textbf{Sp}_V(F')$,  onto the set of conjugacy classes of $\textbf{Sp}_V(F)$.  Now set $\psi'=\psi\circ \tr_{F'/F}$.
\begin{theorem*}
There is  a canonical  extension $\widetilde{W_{\psi'}}$  of $W_{\psi'}$ to $\Gal(F'/F)\ltimes \textbf{Sp}_V(F')$ such that
 $$(\star) \quad \quad \tr \widetilde{W_{\psi'}}(\sigma, g)=\tr W_{\psi}\big( Ng\big)$$
 for any $g\in \textbf{Sp}_V(F')$.
\end{theorem*}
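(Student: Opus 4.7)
The strategy is to realize $W_{\psi'}$ via the Heisenberg group and exploit Galois equivariance to produce the extension, then to verify $(\star)$ by an explicit trace computation in a Schr\"odinger model.

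First, let $H(V')$ denote the Heisenberg group with underlying set $V' \times F'$ and the standard symplectic cocycle, and let $\rho_{\psi'}$ denote its unique irreducible representation with central character $\psi'$ (Stone--von Neumann). The Weil representation $W_{\psi'}$ is then the honest action of $\Sp_V(F')$ on the space of $\rho_{\psi'}$ by canonical intertwiners, following G\'erardin. Since $V' = V\otimes_F F'$ carries a natural Frobenius action preserving the symplectic form, and since $\psi'\circ\sigma = \psi'$ (because $\tr_{F'/F}$ is Galois-invariant), $\sigma$ acts on $H(V')$ by automorphisms preserving the central character. Applying Stone--von Neumann to $\rho_{\psi'}\circ\sigma$ yields an intertwiner $A$, unique up to scalar, satisfying $A\,\rho_{\psi'}(h)\,A^{-1} = \rho_{\psi'}(\sigma h)$ for all $h\in H(V')$. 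By the functoriality of G\'erardin's construction, the same $A$ satisfies $A\,W_{\psi'}(g)\,A^{-1} = W_{\psi'}(\sigma g)$ for $g\in \Sp_V(F')$.

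Second, to pin down the scalar and obtain a canonical extension, fix a Lagrangian $L\subset V$ defined over $F$ (such an $L$ exists for every symplectic space over $F$), realize $\rho_{\psi'}$ in the Schr\"odinger model on $\C[L']$ with $L' = L\otimes_F F'$, and take $A$ to be the basis permutation $\delta_x \mapsto \delta_{\sigma x}$. A direct check (using $\psi'\circ\sigma = \psi'$ for the dual-Lagrangian part of the Heisenberg action) confirms that $A$ has the required intertwining property and satisfies $A^n = \id$ with $n = [F':F]$. Defining $\widetilde{W_{\psi'}}(\sigma^k, g) := A^k\,W_{\psi'}(g)$ and verifying the semi-direct product group law gives the desired extension to $\Gal(F'/F) \ltimes \Sp_V(F')$; a different choice of $L$ produces a conjugate operator, so $\widetilde{W_{\psi'}}$ is canonical up to isomorphism.

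Third, to establish $(\star)$, I would compute both sides via explicit character formulas. In the Schr\"odinger model, $W_{\psi'}(g)$ is an integral operator on $\C[L']$ whose kernel $K_g(x,y)$ is a quadratic exponential in $\psi'$ obtained from a Bruhat decomposition with respect to the Siegel parabolic $P_{L'}$. Composing with $A$ gives $\tr \widetilde{W_{\psi'}}(\sigma, g) = \sum_{x \in L'} K_g(\sigma^{-1} x, x)$, a sum over $\sigma$-twisted fixed points of $g$ on $L'$, weighted by Gauss-sum factors in $\psi'$. On the other side, by the G\'erardin character formula on $\Sp_V(F)$, $\tr W_\psi(Ng)$ equals $\gamma_\psi(Q_{Ng})\,q^{(\dim V^{Ng})/2}$ where $V^{Ng} = \Ker(Ng-1)$ and $Q_{Ng}$ is the associated Cayley-type quadratic form. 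The Shintani norm identifies $V^{Ng}$ with the $F$-structure of $\{v \in V' : \sigma g v = v\}$, matching the $q$-powers. The main obstacle is matching the Weil-index factors, namely $\gamma_{\psi'}$ of the twisted Cayley form on $V'$ against $\gamma_\psi$ of the untwisted form on $V^{Ng}$. This will require the Hasse--Davenport relation, which converts a Gauss sum over $F'$ with additive character $\psi' = \psi\circ\tr_{F'/F}$ into a power of a Gauss sum over $F$, combined with a Galois-descent statement for the relevant quadratic forms on $\sigma$-stable subspaces; with these two technical ingredients in hand, $(\star)$ follows term by term.
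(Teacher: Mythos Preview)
Your construction of the extension (the first two steps) is essentially the paper's: the Schr\"odinger model on $\C[\textbf{X}^\star(F')]$ with $\sigma$ acting by $f\mapsto f\circ\sigma^{-1}$ is exactly the operator $I_\sigma$ of \S4, and pinning it down by $\tr I_\sigma=q^n=\tr W_\psi(1)$ is how uniqueness is established there.

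The divergence is in proving $(\star)$. The paper does \emph{not} attempt a direct, global character computation. Instead it proceeds by induction on $n=\tfrac12\dim V$: support considerations (\S5) reduce from $\textbf{Sp}_V\textbf{H}_V$ to $\textbf{Sp}_V$; if $h=Ng$ stabilises a nontrivial orthogonal splitting $V=V_1\oplus V_2$ or a nontrivial isotropic subspace, compatibility of $\widetilde{\rho'}$ with the corresponding tensor decomposition (\S6) or parabolic induction (\S7), together with Gyoja's functoriality for norm maps, reduces to smaller $n$. The only elements not caught by these reductions are regular elements of a maximally elliptic torus, and those are handled (\S9--10) by a further restriction-of-scalars argument that brings everything down to $\SL_2$, where the identity is proved representation-theoretically via explicit virtual-character decompositions of $\rho|_{\textbf{T}\textbf{H}_V}$, not by Gauss-sum matching.

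Your third step, by contrast, proposes a single uniform computation: write the twisted trace as $\sum_{x\in L'}K_g(\sigma^{-1}x,x)$, identify the ``$q$-power'' via $V^{Ng}\simeq\{v\in V':\sigma g\,v=v\}^{F}$, and match the Weil-index factors by Hasse--Davenport plus a Galois-descent statement for the Cayley quadratic form. This is a genuinely different route, and it is attractive, but as written it has a real gap. The kernel $K_g$ is only simple on the big Bruhat cell; on smaller cells it degenerates, and the twisted sum is not a single Gauss sum but a stratified one whose organisation already amounts to the parabolic/orthogonal reductions the paper performs. More seriously, the assertion that the Cayley form of $Ng$ over $F$ is obtained by Galois descent from a ``twisted Cayley form'' over $F'$, and that the resulting $\gamma_\psi$ and $\gamma_{\psi'}$ match via Hasse--Davenport, is exactly the heart of the matter and is not proved here; Hasse--Davenport gives $\gamma_{\psi'}=\gamma_\psi^{[F':F]}$ for a \emph{fixed} rank-one form, not an identification of Weil indices of two different forms living over different fields. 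The paper's elaborate reduction machinery is there precisely because this matching is delicate for anisotropic (elliptic) classes. If you want to pursue the direct route, you must actually carry out the descent of quadratic forms under the norm and verify the sign/index identity case by case --- at which point you will likely find yourself reproducing the $\SL_2$ computation of \S9 in disguise.
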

We actually give an explicit model for $\widetilde{W_{\psi'}}$, using the Schr\"odinger model of $W_{\psi'}$(cf. \S \ref{EWR}). Note that our results are in fact more general, in that we consider norm maps for any power of $\sigma$: the corresponding  statement is in \S \ref{EWR}.   We also establish the analogous results  in \S \ref{EWR} for the Weil representation of the group $\textbf{GSp}_V$ of similitudes of $(V, \langle, \rangle)$---the class of that representation does not depend on the choice of $ \psi$. With the same methods, we can prove that the Weil representations of general linear
 groups and unitary groups defined by G\'erardin in [\cite{Gero}, \S 2 and \S 3] are compatible with Shintani lifting as well. We shall come back to those cases, with applications, in future work. \\

In fact the character relation ($\star$) in the theorem is valid for a pair $(\sigma, g)$, where $g$ is in the semi-direct product $\textbf{Sp}_V(F') \ltimes \textbf{H}_V(F')$. But the identity is $0=0$ unless $(\sigma, g)$ is conjugate to $(\sigma, g')$ with $g'$ in $\textbf{Sp}_V(F') \times \textbf{Z}_V(F')$,  $\textbf{Z}_V$ being the centre of $\textbf{H}_V$ (see \S \ref{SOTCOFTEWR}). So in effect, we are reduced to proving ($\star$) for a fixed $g$ in $\textbf{Sp}_{V}(F')$, or more conveniently for a fixed norm $h$ in $\textbf{Sp}_V(F)$.\\

We proceed by induction on $2n=\dim V$, allowing the field $F$ to vary. If $h$ belongs to some proper  parabolic subgroup of $\textbf{Sp}_V(F)$, we use the mixed Schr\"odinger model (\S \ref{EWR}) to reduce to a smaller dimension (\S  \ref{reduction}). If $h$ stabilizes a decomposition $V= V_1 \oplus V_2$ of $V$ into  a direct sum of two non-zero symplectic subspaces, again we are reduced to a smaller dimension (\S \ref{OD}, \S \ref{reduction}). The remaining case is when $h$ is a regular element of a maximally elliptic torus $\textbf{T}(F)$ of $\textbf{Sp}_V(F)$. More concretely, $V$ is a one-dimensional skew-hermitian vector space over a finite extension $E$ of $F$ of degree $n$ and $h$ is an element in $E^{\times}$ (acting on $V$), of norm $1$ in the subfield $E_+$ such that $[E:E_+]=2$. That case is treated in \S \ref{TCOS} and \S \ref{EOP} with some explicit computations.\\

\emph{Acknowledgements.}
G.Henniart would like to thank C.Bonnaf\'e, F. Digne and J. Michel for mathematical discussions on the present topic. This paper is an extension of the last chapter of the Ph.D. thesis of C-H. Wang, written with G.Henniart as advisor.

\emph{Plan}
\begin{itemize}
\item[1]. Introduction
\item[2]. Notation
\item[3]. Norm maps
\item[4]. Extended Weil representation
\item[5]. Support of the character of the extended Weil representation
\item[6]. Orthogonal decomposition
\item[7]. Restrictions to a parabolic subgroup
\item[8]. Reductions
\item[9]. The case of $\SL_2(F)$
\item[10]. End of  proof.
\end{itemize}
\section{Notation}\label{N}
Throughout the paper, $F$ is a finite field of \textbf{odd} cardinality $q$, and $F'$ is a finite extension of $F$ of degree $m$; we put $\Gamma=\Gal(F'/F)$.  We also fix an algebraic closure $\overline{F}$ of $F'$,  and write $\sigma$ for the Frobenius automorphism $x \longmapsto x^q$ of $\overline{F}$; it restricts to the Frobenius automorphism of $F'$ over $F$, for which we also write $\sigma$. For any positive integer $d$, we let $F_d$ be the degree $d$ extension of $F$ in $\overline{F}$; thus $F'= F_m$.

 If $\sigma$ acts on a set $X$, we write $X_{\sigma}$ for the set of fixed points of $\sigma$ in $X$, we use a similar notation for powers of $\sigma$. If $G$ is a group, we write  $\mathcal{C}(G)$ for  the vector space of complex valued class functions on $G$;  if $ G\ltimes H$ is a semi-direct product group, then the law will be given by $(g, h) \cdot (g', h')=(gg', h\cdot g(h'))$, where $g(h')$ denotes the action of $g\in G$ on the element $h'$ of the invariant subgroup $H$.

\section{Norm maps}\label{NM}
Let $i$ be an integer, $0 \leq i \leq m-1$, write $d$ for the  greatest common divisor of $m$ and $i$; put $i=dj$, $m=d\mu$ for some integers $j$ and $\mu$. We choose an integer $t$ such that $ti\equiv d \,(mod \, m)$.

Let $\textbf{G}$ be a connected linear algebraic group over the field $F$. We consider the semi-direct product $\Gal(\overline{F}/F) \ltimes \textbf{G}(\overline{F})$. In \cite{Gyoja}, Gyoja constructs  a norm map $\nnn_{i,t}$ from $\sigma^i \ltimes\textbf{G}(F') $ to $\textbf{G}(F_d)$ in the  following way:

For $g$ in $\textbf{G}(F')$, choose $\alpha=\alpha(g) $ in $\textbf{G}(\overline{F})$ such that
$$ \big(1, \alpha^{-1}\sigma^d (\alpha)\big)=(\sigma^{-it},1) \cdot (\sigma^i,g)^t$$

and let
$$ \nnn_{i,t}(\sigma^i, g)=\alpha\Big(g  \sigma^i(g)\cdots \sigma^{i\big(\mu-1\big)}(g)\Big) \alpha^{-1}.$$
That element $\nnn_{i,t}(\sigma^i, d)$ does belong to $\textbf{G}(F_d)$, and its conjugacy class in $\textbf{G}(F_d)$ does not depend on the choice of $\alpha$. Moreover, Gyoja shows that $\nnn_{i,t}$ induces a bijection from the set of $\textbf{G}(F')$-conjugacy classes in $\sigma^i\ltimes \textbf{G}(F')$ onto the set of conjugacy classes in $\textbf{G}(F_d)$. It is immediate that this bijection is $\sigma$-equivariant.\\
\ \\
Remarks:

(i) For $i=t=1$, we recover the classical Shintani norm map[\cite{Dign}, \cite{Shint}]. Note that $\nnn_{i,t}$ does depend on the choice of $t$; for  instance, it can be proved that $\nnn_{1, m+1}=Sh_{F/F} \circ \nnn_{1,1}$, where $Sh_{F/F}$ is the notation for the Shintani self-lift of \cite{Dign}.

(ii) Putting $\tau=\sigma^d$, which is the Frobenius automorphism for $\overline{F}/F_d$, we see that for $\sigma^i(g)=\tau^j(g)$, the norm $\nnn_{i,t}(\sigma^i, g)$ is the same as $\nnn_{j,t}(\tau^j, g)$, thus we can always reduce our considerations to the case where $i$ is prime to $m$, at the cost of allowing a change of base field from $F$ to $F_d$.

(iii) Assume that $\textbf{G}$ is commutative; then for $g$ in $\textbf{G}(F')$, we have $\nnn_{i,t}(\sigma^i, g)=g\sigma^i(g) \cdots \sigma^{i(\mu-1)}(g)$, in other words, this is simply the usual norm of $g$ from $\textbf{G}(F')$ to $\textbf{G}(F_d)$.\\

Composing with $\nnn_{i,t}$ gives a vector space isomorphism $\mathcal{N}_{i,t}$ of $\mathcal{C}(\textbf{G}(F_d))$ onto the vector space $\mathcal{C}(\sigma^i \ltimes \textbf{G}(F'))$ of complex valued functions on $\sigma^i \ltimes \textbf{G}(F')$ which are invariant under conjugation by $\textbf{G}(F')$. It induces an isomorphism of $\mathcal{C}(\textbf{G}(F_d))_{\sigma}$ onto the vector space $\mathcal{C}(\sigma^i \ltimes \textbf{G}(F'))_{\sigma}$ of complex valued functions on $\sigma^i \ltimes \textbf{G}(F')$ which are invariant under conjugation by $\Gamma \ltimes \textbf{G}(F')$.

When $\textbf{G}$ is abelian, and $\chi$ is a character of $\textbf{G}(F)$, we get a character $\widetilde{\chi'}$ of $\Gamma\ltimes \textbf{G}(F')$ by composing $\chi$ with the usual norm $\nnn$ from $\textbf{G}(F')$ to $\textbf{G}(F)$ and extending trivially on $\Gamma$. So for $g\in \textbf{G}(F')$,    we have
 $$ \widetilde{\chi'} (\sigma, g) =\chi (\nnn(g)).$$

 When $\textbf{G}$ is non-abelian, and $\chi$ is the character of an irreducible representation of $\textbf{G}(F)$, it is \emph{not} generally the case that $\mathcal{N}_{i,t} (\chi) $ is the restriction to $\sigma^i \ltimes \textbf{G}(F')$ of some character  of  a representation of $\Gamma \ltimes \textbf{G}(F')$. However this paper is concerned with a situation where it is indeed the case.

For later use, we shall recall some results of Gyoja in \cite{Gyoja}:
\begin{lemma}\label{lemmaofGyjo}

(i) For any $\chi, \chi'\in \mathcal {C}\big(\textbf{G}(F_d)\big)$, we have $\langle \chi, \chi'\rangle= \langle \mathcal{N}_{i,t}( \chi), \mathcal{N}_{i,t}(\chi')\rangle$, where $\langle \chi, \chi'\rangle:= \frac{1}{|\textbf{G}(F_d)|} \sum_{x\in \textbf{G}(F_d)} \chi(x) \overline{\chi'(x)}$ and $\langle \mathcal{N}_{i,t}( \chi), \mathcal{N}_{i,t}(\chi')\rangle:=\frac{1}{|\sigma^i\ltimes \textbf{G}(F')|} \sum_{y\in \textbf{G}(F')} \chi\big( \nnn_{i,t}(\sigma^i, y)\big) \overline{\chi'\big(\nnn_{i,y}(\sigma^i, y)\big)}$.

(ii)  Through  the   lifting maps $\mathcal{N}_{i,t}$ by allowing  $i$ to vary from $0$ to $m-1$,  we can  decompose
$\mathcal {C}\big(\Gal(F'/F)\textbf{G}(F')\big) $ as  the direct sum $\oplus_{i=0}^{m-1} \mathcal{C}\big(\textbf{G}(F')_{\sigma^i}\big)_{\sigma}$.

(iii) The above decomposition  is compatible  with the usual  induction map, the restriction map, the product map, etc. For example, if $\textbf{H}$ is a connected algebraic  subgroup of $\textbf{G}$ defined over $F$, then for $\chi\in \mathcal {C}\big(\Gal(F'/F)\textbf{G}(F')\big) $ such that $\chi|_{\sigma^i \ltimes \textbf{G}(F') }= \mathcal{N}_{i,t} (\chi')$ for some $\chi'\in \mathcal{C}\big(\textbf{G}(F_d)\big)_{\sigma}$,  we have $\Res_{\Gal(F'/F)\textbf{H}(F')}^{\Gal(F'/F)\textbf{G}(F')}(\chi)= \mathcal{N}_{i,t} \circ \Res_{\textbf{H}(F_d)}^{\textbf{G}(F_d)}(\chi')$.

\end{lemma}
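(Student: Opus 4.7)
My plan is to derive all three items from the main ingredient already stated in the excerpt: the norm $\nnn_{i,t}$ induces a $\sigma$-equivariant bijection between the set of $\textbf{G}(F')$-conjugacy classes in $\sigma^i \ltimes \textbf{G}(F')$ and the set of conjugacy classes in $\textbf{G}(F_d)$.

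For (i), I would upgrade the class bijection to a matching of class cardinalities relative to group orders. By orbit--stabilizer, this amounts to checking that for $y\in\textbf{G}(F')$ with norm $n:=\nnn_{i,t}(\sigma^i,y)$, the twisted $\textbf{G}(F')$-stabilizer $\{x\in\textbf{G}(F'):x^{-1}y\sigma^i(x)=y\}$ has the same order as $\Cent_{\textbf{G}(F_d)}(n)$. My approach is to observe that both are the fixed points (for appropriate Frobenius-type maps) of one and the same algebraic centralizer in $\textbf{G}(\overline F)$, related to one another via conjugation by $\alpha$ and the defining cocycle $\alpha^{-1}\sigma^d(\alpha)=\sigma^{-it}\bigl(y\sigma^i(y)\cdots\sigma^{(t-1)i}(y)\bigr)$; the matching of orders should then follow from Lang--Steinberg applied to that centralizer. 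Once the stabilizer orders agree, both inner products expand as the same sum over corresponding conjugacy classes, giving (i).

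For (ii), I would partition $\Gamma\ltimes\textbf{G}(F')=\bigsqcup_{i=0}^{m-1}\sigma^i\ltimes\textbf{G}(F')$ and observe that each component is preserved by conjugation from all of $\Gamma\ltimes\textbf{G}(F')$, since $\sigma^j$ sends $(\sigma^i,g)$ to $(\sigma^i,\sigma^j(g))$. Consequently,
\[
\mathcal{C}\bigl(\Gamma\ltimes\textbf{G}(F')\bigr)=\bigoplus_{i=0}^{m-1}\mathcal{C}\bigl(\sigma^i\ltimes\textbf{G}(F')\bigr)_\sigma,
\]
the subscript recording the additional invariance under conjugation by $\sigma$ on each component. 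The $\sigma$-equivariance of $\nnn_{i,t}$ already stated in the excerpt then makes $\mathcal{N}_{i,t}$ transport the $i$-th summand isomorphically onto $\mathcal{C}(\textbf{G}(F_d))_\sigma$, yielding (ii).

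For (iii), I would use that norms can be computed inside the connected subgroup: by Lang--Steinberg applied to $\textbf{H}$, for $h\in\textbf{H}(F')$ one may choose the element $\alpha$ appearing in the definition of $\nnn_{i,t}(\sigma^i,h)$ inside $\textbf{H}(\overline F)$, forcing $\nnn_{i,t}^{\textbf{H}}(\sigma^i,h)=\nnn_{i,t}^{\textbf{G}}(\sigma^i,h)$ as elements of $\textbf{H}(F_d)\subset\textbf{G}(F_d)$. The commutation $\Res\circ\mathcal{N}_{i,t}^{\textbf{G}}=\mathcal{N}_{i,t}^{\textbf{H}}\circ\Res$ follows directly. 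The hardest part, as I see it, will be the centralizer-matching argument in (i): simultaneously juggling $\sigma^d$, $\sigma^{it}$, the twisted $\textbf{G}(F')$-action, and the defining cocycle for $\alpha$ requires genuine care. Once that step is in place, (ii) and (iii) are formal consequences.
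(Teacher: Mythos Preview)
Your outline is sound, but you should know that the paper does not prove this lemma at all: it is introduced with ``we shall recall some results of Gyoja in \cite{Gyoja}'' and then simply stated, so the paper's ``proof'' is a citation. What you have sketched is essentially a reconstruction of Gyoja's own argument---class bijection plus centralizer-order matching via Lang--Steinberg for (i), the coset decomposition of $\Gamma\ltimes\textbf{G}(F')$ for (ii), and the observation that for a connected $\textbf{H}$ one may choose $\alpha\in\textbf{H}(\overline{F})$ for (iii). One technical point worth flagging in your write-up of (i): the algebraic centralizer $C_{\textbf{G}}(n)$ need not be connected, so Lang--Steinberg does not apply to it directly; the standard fix is to apply Lang's theorem to the identity component and then argue that the two Frobenius-type maps, being related by conjugation by an element of $\textbf{G}(\overline{F})$, induce bijections between the relevant component-group cosets with fibres of equal size. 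With that caveat, your plan is correct and more informative than the paper's bare citation.
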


\section{Extended Weil representation}\label{EWR}
As in the introduction, we fix a symplectic vector space $V$ over $F$, and write $2n$ for its dimension, $\langle, \rangle$ for the symplectic form on $V$. We see $V$ as a linear algebraic group,  denoted by the bold letter $\textbf{V}$, and similarly for the group $\textbf{Sp}_V$ of isometries of $\textbf{V}$, the group $\textbf{GSp}_V$ of similitudes of $V$.

Let $\textbf{H}_V$ be the Heisenberg group over $F$ associated to $V$: for each $F$-algebra $R$, $\textbf{H}_V(R)$ is the set $\textbf{V}(R) \oplus R$, endowed with the group law
$$(v_1, t_1) (v_2, t_2)=(v_1 + v_2, t_1 + t_2 + \frac{1}{2} \langle v_1, v_2 \rangle_R),$$
where the form $\langle, \rangle_R$ is obtained by scalar extension. Then $\textbf{H}_V$ is a non-abelian connected algebraic group over $F$, with centre $\textbf{Z}_V$ such that $\textbf{Z}_V(R)=\{ (0, x)| x\in R\}$.

Fix a non-trivial character $\psi$ of $F$, and put $\psi'= \psi \circ \tr_{F'/F}$. To $\psi$ is associated the Weil representation of $\textbf{Sp}_V(F) \textbf{H}_V(F)$--- it is in fact an isomorphism class of representations. We write $\rho$ for that Weil representation, and $\tr(\rho)$ for  its character. Similarly to $\psi'$ is associated the Weil representation $\rho'$ of $\textbf{Sp}_V(F') \textbf{H}_V(F')$. Indeed for each positive integer $d$, we have a Weil representation $\rho_d$ of $\textbf{Sp}_V(F_d)\textbf{H}_V(F_d)$ associated to the character $\psi_d=\psi\circ \tr_{F_d/F}$. Our main result is the following:
\begin{theorem}\label{shintaniliftingforSp}
There is a unique extension $\widetilde{\rho'}$ of $\rho'$ to $\Gamma \ltimes \textbf{Sp}_V(F') \textbf{H}_V(F')$ such that, for integers $i,t,d$ as in \S \ref{NM}, and $g \in \textbf{Sp}_V(F')\textbf{H}_V(F')$, we have
$$(\star) \quad \quad \tr\widetilde{\rho'}(\sigma^i, g)= \tr\rho_d \big( \nnn_{i,t}(\sigma^i, g)\big).$$
In particular, for $i=t=1$, we obtain
$$\tr\widetilde{\rho'}(\sigma, g)= \tr\rho\big( \nnn_{1,1}(\sigma, g)\big),$$
i.e. the Weil representation is ``compatible'' with Shintani lifting.
\end{theorem}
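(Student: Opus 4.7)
The plan is to construct the extension $\widetilde{\rho'}$ explicitly via a Schr\"odinger model and then verify the character identity $(\star)$ by induction on $n = \tfrac{1}{2}\dim V$, allowing $F$ to vary. For existence, I would fix a Lagrangian decomposition of $\textbf{V}(F')$ and realize $\rho'$ on the space of complex functions on one Lagrangian; the Frobenius $\sigma$ then acts naturally on this function space (sending $f$ to $f\circ\sigma^{-1}$, possibly up to a scalar normalization chosen so that $\sigma$ intertwines $\rho'$ with $\rho' \circ \sigma^{-1}$), and this gives the required extension to $\Gamma \ltimes \textbf{Sp}_V(F')\textbf{H}_V(F')$. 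Uniqueness of any extension satisfying $(\star)$ is automatic: $(\star)$ prescribes the trace on every coset $\sigma^i \ltimes \textbf{Sp}_V(F')\textbf{H}_V(F')$, and a finite-dimensional representation is determined up to isomorphism by its character.

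By Remark (ii) of \S\ref{NM}, the norm $\nnn_{i,t}(\sigma^i, g)$ coincides with $\nnn_{j,t}(\tau^j, g)$ after replacing $F$ by $F_d$ and $\sigma$ by $\tau = \sigma^d$, so it suffices to handle the case where $i$ is prime to $m$, and after a base change we may assume $i = 1$. The support analysis announced in \S\ref{SOTCOFTEWR} says that $\tr\widetilde{\rho'}(\sigma, g)$ vanishes unless $(\sigma, g)$ is conjugate to an element with $\textbf{Sp}_V(F')\textbf{H}_V(F')$-component in $\textbf{Sp}_V(F') \times \textbf{Z}_V(F')$, and the same holds for the right-hand side $\tr\rho\bigl(\nnn_{1,1}(\sigma,g)\bigr)$. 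The character values on the $\textbf{Z}_V$ part being a central twist that behaves transparently under the norm, the core task reduces to proving $\tr\widetilde{\rho'}(\sigma, g) = \tr\rho(Ng)$ for $g \in \textbf{Sp}_V(F')$ with $h := Ng \in \textbf{Sp}_V(F)$.

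I would then run an induction on $n$, distinguishing three cases for the norm $h \in \textbf{Sp}_V(F)$. First, if $h$ lies in a proper parabolic subgroup, the mixed Schr\"odinger model of \S\ref{EWR} realizes $\rho$ as induced from a Levi factor containing a Weil representation of a smaller symplectic group; the Frobenius intertwiner inherits a compatible structure on the $F'$ model, and combining the Deligne--Lusztig/Frobenius fixed-point formula on the induced character with the compatibility of $\mathcal{N}_{i,t}$ with restriction (Lemma~\ref{lemmaofGyjo}(iii)) reduces the identity to its analogue in smaller rank. Second, if $h$ stabilizes a non-trivial orthogonal splitting $V = V_1 \oplus V_2$ into symplectic subspaces, the Weil representation factors as a tensor product $\rho_{V_1} \otimes \rho_{V_2}$ and the norm map respects this factorization, so the induction again closes the case.

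The main obstacle is the remaining elliptic case, where $h$ is a regular element of a maximally elliptic torus, i.e. $V$ is a one-dimensional skew-hermitian space over a degree-$n$ extension $E$ of $F$ with involution fixing a subfield $E_+$, and $h \in E^{\times}$ has $E_+$-norm $1$. Here no smaller symplectic subspace is preserved, so the induction cannot bite and both sides must be computed by hand. My plan, following \S\ref{TCOS}--\S\ref{EOP}, is to first treat $n = 1$, i.e.\ $\SL_2(F)$ with $h$ non-split regular semisimple, using G\'erardin's explicit character formulas on the Weil representation together with a direct computation of $\nnn_{1,t}$ for elements of a non-split torus; and then to propagate to general $n$ via the identification of the ambient maximally elliptic torus with the norm-one subgroup of a quadratic extension, realizing the Weil character on such a torus as a Gauss-sum-type expression indexed by characters of $E^{\times}$ whose behavior under $F'/F$-base change is governed by compatibility of Gauss sums with the trace map $\tr_{F'/F}$. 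The sharpest technical point will be checking that the sign/fourth-root of unity arising from the Schr\"odinger model's Frobenius intertwiner matches, on the nose, the corresponding sign in G\'erardin's character formula after passing through the norm.
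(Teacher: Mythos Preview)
Your overall architecture---construct $\widetilde{\rho'}$ in a Schr\"odinger model with $\sigma$ acting by $f\mapsto f\circ\sigma^{-1}$, deduce uniqueness from the trace, reduce to $i$ prime to $m$, use the support result of \S\ref{SOTCOFTEWR} to land in $\textbf{Sp}_V(F')$, then induct on $n$ via the parabolic and orthogonal-decomposition reductions---is exactly what the paper does in \S\ref{EWR}--\S\ref{reduction}. So far your plan and the paper's are indistinguishable.

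The divergence is in the elliptic endgame. You propose to handle $n=1$ by plugging into G\'erardin's explicit character values and comparing Gauss-sum expressions under the norm, then to push to general $n$ by writing the Weil character on the elliptic torus as a Gauss-sum over characters of $E^{\times}$. The paper does something more structural and avoids any direct character computation: for $n=1$ it proves (Proposition~\ref{sl2} and its $F'$ analogues) that $\rho|_{\textbf{T}\textbf{H}_V}$ and $\widetilde{\rho'}|_{\Gamma\ltimes\textbf{T}(F')\textbf{H}_V(F')}$ are each a \emph{difference of two induced representations}, namely $\Ind_{\textbf{H}_V}^{\textbf{T}\textbf{H}_V}(\rho|_{\textbf{H}_V}) - \Ind_{\textbf{T}\textbf{Z}_V}^{\textbf{T}\textbf{H}_V}(\omega\psi)$ and the analogous virtual representation over $F'$; since Gyoja's norm is compatible with induction (Lemma~\ref{lemmaofGyjo}(iii)), $(\star\star)$ then follows termwise with no Gauss sums in sight. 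The $m$ even case requires a twist by the sign character $\eta$ of $\Gamma$, which drops out of the virtual-representation identity automatically. For general $n$, the paper does not compute the character on $\textbf{T}$ at all: it views $V$ as a $2$-dimensional symplectic space $W$ over $E_+$, decomposes $E_+\otimes_F F'$ into fields permuted by $\Gamma$, and realizes $\widetilde{\rho'}$ restricted to the relevant subgroup as a \emph{tensor-induced} representation built from the extended $\SL_2$ Weil representation over $E_+^{\alpha_0}$; the twisted trace at $(\sigma^i,s')$ then collapses to a single $\SL_2$ twisted trace already handled.

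Your Gauss-sum route is plausible and would likely succeed, but the ``sharpest technical point'' you flag---matching the fourth-root-of-unity normalization of the Frobenius intertwiner against the sign in the explicit character formula through the norm---is precisely what the paper's approach is engineered to bypass. The virtual-representation identity plus tensor induction never isolates that constant; it is absorbed into the equality $\tr\widetilde{\rho'}(\sigma)=q^n$ checked once at the outset. If you pursue your route, be aware that the torus $\textbf{T}$ over $F'$ can split (when $m$ is even in the $n=1$ case, or more generally when $E\otimes_F F'$ is not a field), so your ``Gauss-sum-type expression'' changes shape and you will have to track the resulting sign by hand, which is where errors are most likely to creep in.
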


As indicated in the introduction, this will be proved progressively. In this \S \ref{EWR}, we use the Schr\"odinger model of $\rho'$ to construct an extension $\widetilde{\rho'}$  such that $\tr\widetilde{\rho'}(\sigma)=\tr\rho(1)$; note that $\nnn_{1,t}(\sigma)=1$ for all possible $t$'s. Then by Clifford theory, $\widetilde{\rho'}$ is the unique extension  satisfying  this simple character relation, so the remaining problem will be to prove ($\star$) in general. For this purpose, in the following section \S \ref{SOTCOFTEWR}, we  examine the support of the character $\widetilde{\rho'}$; in \S  \ref{OD} and \S \ref{RTAPPS}, we consider  the restriction of $\widetilde{\rho'}$ to some interesting subgroups; the proof of ($\star$) will be reduced to  a very special case, and we treat  this special case   in \S \ref{TCOS} and \S \ref{EOP}. \\

Firstly admitting the theorem, let us derive a consequence for the groups of symplectic similitudes.  Put  $\pi=\Ind_{\textbf{Sp}_V(F)}^{\textbf{GSp}_V(F)}\rho|_{\textbf{Sp}_V(F)}$;  it is the Weil representation of $\textbf{GSp}_V(F)$ which is  independent( up to isomorphism ) of the choice of $\psi$\cite{Gero};    similarly as in \S \ref{EWR},  for each factor  $d$  of $m$,   we denote  the corresponding  Weil representation  of $\textbf{GSp}_V(F_d)$  by $\pi_d$, and   also  write $\pi'$ for $d=m$.

\begin{theorem}
There is a unique extension $\widetilde{\pi'}$ of $\pi'$ to $\Gamma \ltimes \textbf{GSp}_V(F') $ such that, for integers $i,t,d$ as in \S \ref{NM}, and $g' \in \textbf{GSp}_V(F')$, we have
$$(\star') \quad \quad \tr\widetilde{\pi'}(\sigma^i, g')= \tr\pi_d \big( \nnn_{i,t}(\sigma^i, g')\big).$$
Moreover, the induced representation of $\Gamma \ltimes \textbf{GSp}_V(F')$ from  the representation  $\widetilde{\rho'}$ of $\Gamma \ltimes \textbf{Sp}_V(F')$ satisfies the desired conditions.
\end{theorem}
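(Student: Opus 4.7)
The plan is to set
\[
\widetilde{\pi'} := \Ind_{\Gamma \ltimes \textbf{Sp}_V(F')}^{\Gamma \ltimes \textbf{GSp}_V(F')} \bigl(\widetilde{\rho'}\big|_{\Gamma \ltimes \textbf{Sp}_V(F')}\bigr),
\]
where $\widetilde{\rho'}$ is the extension furnished by Theorem~\ref{shintaniliftingforSp} (restricted from $\Gamma \ltimes \textbf{Sp}_V(F')\textbf{H}_V(F')$ to the subgroup $\Gamma \ltimes \textbf{Sp}_V(F')$), and then to verify in turn that $\widetilde{\pi'}$ restricts to $\pi'$ on $\textbf{GSp}_V(F')$, that it satisfies $(\star')$, and that it is the unique such extension.

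First I would check the restriction. The key observation is that $\Gamma \ltimes \textbf{GSp}_V(F') = \textbf{GSp}_V(F') \cdot (\Gamma \ltimes \textbf{Sp}_V(F'))$, since any element factors as $(\sigma^i, g') = (1, g')(\sigma^i, 1)$; hence there is a single double coset, and the intersection of the two subgroups is $\textbf{Sp}_V(F')$. Mackey's formula then collapses to a single term and gives $\widetilde{\pi'}\big|_{\textbf{GSp}_V(F')} = \Ind_{\textbf{Sp}_V(F')}^{\textbf{GSp}_V(F')}\bigl(\rho'\big|_{\textbf{Sp}_V(F')}\bigr) = \pi'$.

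For the trace identity $(\star')$ I would apply the induction version of Lemma~\ref{lemmaofGyjo}(iii), with $\textbf{H} = \textbf{Sp}_V \subset \textbf{GSp}_V = \textbf{G}$. By Theorem~\ref{shintaniliftingforSp} the class function $\tr\widetilde{\rho'}\big|_{\Gamma \ltimes \textbf{Sp}_V(F')}$ corresponds, on each slice $\sigma^i \ltimes \textbf{Sp}_V(F')$, to $\tr\rho_d\big|_{\textbf{Sp}_V(F_d)}$ under $\mathcal{N}_{i,t}$. Since $\pi_d = \Ind_{\textbf{Sp}_V(F_d)}^{\textbf{GSp}_V(F_d)}\bigl(\rho_d\big|_{\textbf{Sp}_V(F_d)}\bigr)$ by definition, the compatibility of Gyoja's decomposition with induction identifies $\tr\widetilde{\pi'}$ on $\sigma^i \ltimes \textbf{GSp}_V(F')$ with $\mathcal{N}_{i,t}(\tr\pi_d)$, which evaluated at $g'\in \textbf{GSp}_V(F')$ is exactly $(\star')$. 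Uniqueness is then immediate: the relation $(\star')$ pins down the character of any candidate extension on every slice $\sigma^i \ltimes \textbf{GSp}_V(F')$, and characters determine representations of a finite group up to isomorphism.

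The only point I expect to need real care is the induction form of Lemma~\ref{lemmaofGyjo}(iii), which is stated explicitly in the excerpt only for restriction (under the author's ``etc.''). If needed, I would verify it directly by a Mackey computation: parametrize a transversal of $\Gamma \ltimes \textbf{Sp}_V(F')$ in $\Gamma \ltimes \textbf{GSp}_V(F')$ by the similitude classes $\textbf{GSp}_V(F')/\textbf{Sp}_V(F') \cong (F')^{\times}$, expand the character of the induced representation via the standard formula, and match the resulting double sum against the Mackey expansion of $\tr\pi_d$ at $\nnn_{i,t}(\sigma^i, g')$ after transport through the norm map; alternatively one can appeal to Frobenius reciprocity together with the isometry property of $\mathcal{N}_{i,t}$ in Lemma~\ref{lemmaofGyjo}(i).
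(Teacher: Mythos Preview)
Your proposal is correct and follows essentially the same route as the paper: define $\widetilde{\pi'}$ as the induced representation from $\Gamma\ltimes\textbf{Sp}_V(F')$, invoke the induction compatibility in Lemma~\ref{lemmaofGyjo}(iii) together with Theorem~\ref{shintaniliftingforSp} to obtain $(\star')$, and deduce uniqueness from the fact that $(\star')$ determines the character on every coset (the paper phrases this via Lemma~\ref{lemmaofGyjo}(ii)). Your extra care---checking the restriction to $\textbf{GSp}_V(F')$ via Mackey and flagging that the induction form of (iii) is only implicit under the ``etc.''---is welcome but does not change the argument.
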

\begin{proof}
Uniqueness comes from Lemma \ref{lemmaofGyjo} (ii),  and by (iii) in the same lemma and Theorem \ref{shintaniliftingforSp},  we see $[\Ind_{\Gal(F'/F) \textbf{Sp}_V(F')}^{\Gal(F'/F)\textbf{GSp}_V(F')} \Big( \tr \widetilde{\rho'}\Big)](\sigma^i, g')= [\mathcal{N}_{i,t} \Big( \Ind_{\textbf{Sp}(F_d)}^{\textbf{GSp}(F_d)} \tr \rho_d \Big)](\sigma^i, g') $; in this equality, the second term is equal to $\tr \pi_d \big(\nnn_{i,t}(\sigma^i, g')\big)$, so the results follow.
\end{proof}

Now let us fix a complete polarisation $V= X\oplus X^{\star}$ of $V$, so that $X, X^{\star}$ are two Lagrangian subspaces of $V$. We denote the corresponding algebraic groups over $F$ by $\textbf{X}, \textbf{X}^{\star}$ and $\textbf{V}$ respectively and  write $\epsilon'$  for the unique non trivial quadratic character of $F'^{\times}$. Then the Weil representation $\rho'$ of $\textbf{Sp}_V(F') \textbf{H}_V(F')$ can be realized in the space $\C[\textbf{X}^{\star}(F')]$ of complex functions on $\textbf{X}^{\star}(F')$ by the following formulas[cf. \cite{Gero}]:
\begin{equation}\label{representationsp1}
\rho'\big(1,(x+x^{\star}+k)\big)f(y^{\star})=\psi'(k+\langle y^{\star},x\rangle) f(x^{\star}+y^{\star}),
\end{equation}
\begin{equation}\label{representationsp2}
\rho'\big(\begin{pmatrix}
  1&b\\
  0 & 1
\end{pmatrix},1\big)f(y^{\star})=\psi'(\frac{\langle by^{\star},y^{\star}\rangle}{2}) f(y^{\star}),
\end{equation}
\begin{equation}\label{representationsp3}
\rho'\big(\begin{pmatrix}
  a& 0\\
  0 &a^{\star -1 }
\end{pmatrix},1\big)f(y^{\star})=\epsilon'(\det(a)) f(a^{\star}y^{\star}),
\end{equation}
\begin{equation}\label{representationsp4}
\rho'\big(\begin{pmatrix}
  0&c'\\
  c &0
\end{pmatrix},1\big)f(y^{\star})=\gammaup(\psi')^{-n}\epsilon'(\det(c))\int_{\textbf{X}^{\star}(F')}f(x^{\star})\psi'(\langle x^{\star},c^{-1}y^{\star}\rangle) dx^{\star},
\end{equation}
where $b\in \Hom(\textbf{X}^{\star}(F'), \textbf{X}(F'))$, $a\in \Aut(\textbf{X}(F'))$, and $a^{\star} \in \Aut(\textbf{X}^{\star}(F'))$ is the adjoint of $a$ with respect to the bilinear form $\textbf{X}(F') \times \textbf{X}^{\star}(F') \longrightarrow F'$ given by $(x, x^{\star}) \longmapsto \langle x, x^{\star} \rangle$, and finally $c\in \Isom(\textbf{X}^{\star}(F'),\textbf{X}(F'))$, $c'\in \Isom(\textbf{X}(F'), \textbf{X}^{\star}(F'))$.\\

Let $I_{\sigma}$ be the automorphism of $\C[\textbf{X}^{\star}(F')]$ given by
$$I_{\sigma}(f) (x) =f(\sigma^{-1}(x)) \textrm{ for } f\in \C[\textbf{X}^{\star}(F')], x\in \textbf{X}^{\star}(F').$$
It is easily verified on the formulas (\ref{representationsp1}) to (\ref{representationsp4}) that
$$I_{\sigma}\rho'(g)I_{\sigma}^{-1}=\rho'(\sigma (g)) \textrm{ for } g \in \textbf{Sp}_V(F')\textbf{H}_V(F');$$
in formulas (1) and (2), one uses the facts that $\psi'$ is $\sigma$-invariant and that the symplectic form on $\textbf{V}(F')$ is $\sigma$-equivariant; in formulas (3) and (4), one uses moreover that $\epsilon'$ is also $\sigma$-invariant.

Since $(I_{\sigma}^m)$ is the identity, it follows that there is a unique extension of the action $\rho'$ of $\textbf{Sp}_V(F')\textbf{H}_V(F')$ on $\C[\textbf{X}^{\star}(F')]$ to an action $\widetilde{\rho'}$ of $\Gamma\ltimes \textbf{Sp}_V(F') \textbf{H}_V(F')$ such that $\sigma$ acts via $I_{\sigma}$.  By the formulas for $I_{\sigma}$, $\tr\widetilde{\rho'}(\sigma) = q^n$ which is also $\tr\rho(1)$.\\

\section{Support of the character of the extended Weil representation}\label{SOTCOFTEWR}
It is a result of [\cite{Gero}, p.84-85] that $\check{\rho} \otimes \rho$ is isomorphic to the representation of $\textbf{Sp}_V(F) \textbf{H}_V(F)$ induced from the trivial representation of $\textbf{Sp}_V(F) \textbf{Z}_V(F)$;  in particular, the character of $\rho$ is $0$ outside the conjugates of $\textbf{Sp}_V(F) \textbf{Z}_V(F)$. We establish the analogous fact for $\widetilde{\rho'}$.
\begin{proposition}
$\check{\widetilde{\rho'}} \otimes \widetilde{\rho'}$ is isomorphic to the representation of $\Gamma \ltimes \textbf{Sp}_V(F') \textbf{H}_V(F')$ induced from the trivial character of $\Gamma\ltimes \textbf{Sp}_V(F') \textbf{Z}_V(F')$.
\end{proposition}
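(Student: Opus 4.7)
\emph{Plan.} The strategy is to produce an explicit intertwiner between the two sides whose $\Gamma$-equivariance is visible by construction. Write $N = \textbf{Sp}_V(F')\textbf{H}_V(F')$ and $H = \textbf{Sp}_V(F')\textbf{Z}_V(F')$. G\'erardin's theorem, applied to $\rho'$ itself, already provides an $N$-equivariant isomorphism between the two sides, so the remaining task will be to realise it in a form compatible with the $\sigma$-action.

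First I will identify both sides concretely. On the left, view $\check{\widetilde{\rho'}} \otimes \widetilde{\rho'}$ as $\End_\C(\C[\textbf{X}^\star(F')])$ with the conjugation action by $\widetilde{\rho'}$; in particular $\sigma$ acts by $T \mapsto I_\sigma T I_\sigma^{-1}$. On the right, using the bijection $(\Gamma \ltimes N)/(\Gamma \ltimes H) = N/H \cong \textbf{V}(F')$, realise the induced representation on $\C[\textbf{V}(F')]$: $\textbf{H}_V(F')$ acts through its quotient $\textbf{V}(F')$ by translation, $\textbf{Sp}_V(F')$ acts via its natural action on $\textbf{V}(F')$, and $\sigma$ acts by $F \mapsto F \circ \sigma^{-1}$.

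Next I will define the intertwiner by a Fourier-type formula such as
\[
\Phi(T)(v) = \sum_{u \in \textbf{V}(F')} \psi'(\langle u, v \rangle)\, \tr\bigl(T \rho'(1, (u, 0))\bigr),
\]
using that, by Stone--von Neumann, the operators $\rho'(1, (u, 0))$ span $\End_\C(\C[\textbf{X}^\star(F')])$, so $\Phi$ is a linear bijection. Checking $N$-equivariance is then a concrete rendering of G\'erardin's argument: the Heisenberg commutator identity $(u_0, 0)^{-1}(u, 0)(u_0, 0) = (u, -\langle u_0, u\rangle)$ converts the conjugation action of $\textbf{V}(F')$ into translation on the codomain, and the $\textbf{Sp}_V$-invariance of $\langle \cdot, \cdot \rangle$ handles the symplectic part. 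The action of $\textbf{Z}_V(F')$ is trivial on both sides for obvious reasons.

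Finally, the $\sigma$-equivariance of $\Phi$ should follow at once from the key identity $I_\sigma \rho'(g) I_\sigma^{-1} = \rho'(\sigma(g))$ of \S \ref{EWR}, combined with the $\sigma$-invariance of $\psi'$ and of $\langle \cdot, \cdot \rangle$: the change of variable $u \mapsto \sigma(u)$ will transform $\Phi(I_\sigma T I_\sigma^{-1})(v)$ into $\Phi(T)(\sigma^{-1}(v))$. I expect the main obstacle to be the initial choice of formula for $\Phi$ --- every ingredient of the definition must itself be $\sigma$-invariant so that no extra twist appears when comparing the two extensions of the underlying $N$-representation. Once that choice is made, the verifications are short and the proposition follows.
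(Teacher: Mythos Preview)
Your plan is correct and, once the routine verifications are carried out as you outline, yields a complete proof. The computations you sketch all go through: the Heisenberg commutation gives $\rho'(u_0,0)^{-1}\rho'(u,0)\rho'(u_0,0)=\psi'(-\langle u_0,u\rangle)\rho'(u,0)$, the symplectic change of variable $u\mapsto su$ handles $\textbf{Sp}_V(F')$, and the substitution $u\mapsto\sigma(u)$ together with $\psi'\circ\sigma=\psi'$ and $\sigma$-equivariance of $\langle\cdot,\cdot\rangle$ gives exactly $\Phi(I_\sigma T I_\sigma^{-1})(v)=\Phi(T)(\sigma^{-1}v)$.

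Your route differs from the paper's. The paper models $\check{\widetilde{\rho'}}\otimes\widetilde{\rho'}$ on $\C[\textbf{X}(F')]\otimes\C[\textbf{X}^\star(F')]$ by realising $\check{\rho'}$ as the Weil representation $\rho'^{-}$ attached to $\psi'^{-}$ (which forces a separate determination of the $\Gamma$-twist on the contragredient factor), and then quotes G\'erardin's explicit intertwiner $I'=(\phi\,\star)\circ I$ with $\C[\textbf{V}(F')]$, checking by hand that $I'$ commutes with $I_\sigma$. You instead work in the endomorphism model $\End_\C(\C[\textbf{X}^\star(F')])$ with the conjugation action, where the $\sigma$-extension is automatic, and build the intertwiner from the matrix coefficients $u\mapsto\tr(T\rho'(1,(u,0)))$ followed by symplectic Fourier transform. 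Your approach is a bit more intrinsic: it never uses the Schr\"odinger model for $\check{\rho'}$ and sidesteps the computation of the root of unity $\xi$; the paper's approach has the advantage of plugging directly into the formulas already established in G\'erardin's paper.
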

\begin{proof}
The representation $\widetilde{\lambda'}=\Ind_{\Gal(F'/F)\ltimes \textbf{Sp}_V(F')\textbf{Z}_V(F')}^{\Gal(F'/F)\ltimes \textbf{Sp}_V(F')\textbf{H}_V(F')} 1$ can  be realized in $\C[\textbf{V}(F')]$ by the following formulas:
\begin{equation}\label{equationH}
\widetilde{\lambda'}(h)(F)(v)=F(v+v_0) \textrm{ for }  h \in \textbf{H}_V(F') \textrm{ with projection } v_0 \textrm{ on } \textbf{V}(F'),
\end{equation}
\begin{equation}\label{equationSp}
\widetilde{\lambda'}(s)(F)(v)=F(s^{-1}v) \textrm{ for } s \in \textbf{Sp}_V(F'),
\end{equation}
\begin{equation}\label{equationSigma}
\widetilde{\lambda'} (\sigma)(F)(v)=F(\sigma^{-1}(v)).
\end{equation}
Recall, for $g\in \textbf{Sp}_V(F')\textbf{H}_V(F')$, we have
$$\widetilde{\rho'}(g)\widetilde{\rho'}(\sigma) (f)(x^{\star})=  \rho'(g) I_{\sigma}(f) (x^{\star}), $$
where $x^{\star} \in \textbf{X}^{\star}(F')$. As shown in \cite{Gero}, $\check{\widetilde{\rho'}}|_{\textbf{Sp}_V(F')\textbf{H}_V(F')}$ is isomorphic to the Weil representation ${\rho'}^-$ associated to the character  $\psi^-$ ( defined as $x \longrightarrow  \psi(-x)$) of $F$. Hence  the extended   representation $\check{\widetilde{\rho'}}$ can be realized in $\C[\textbf{X}(F')]$  by the analogous formula:
$$\check{\widetilde{\rho'}}(g) \widetilde{\rho'}(\sigma)(f)(x)= e^{\frac{2\pi i k'}{m}}{\rho'}^- (g) I_{\sigma}(f) (x)$$
for $ g\in \textbf{Sp}_V(F')\textbf{H}_V(F')$, $x \in \textbf{X}(F')$ and  $m$-th root of unity $\xi$. Computing its trace at $\sigma$, we  get   $\xi=1$.

Now let $I$ be an automorphism on $\C[\textbf{V}(F')]$ defined  by $I(f)(x+x^{\star})=\psi'\big(\langle x, x^{\star}\rangle) f(x+x^{\star})$ for $x\in \textbf{X}(F')$ and $x^{\star}\in \textbf{X}^{\star}(F')$.  In  [\cite{Gero}, p. 84], G\'erardin verifies that $I\cdot\widetilde{\lambda'}(h)= \check{ \rho'} \otimes \rho'(h) \cdot I$ for $h \in \textbf{H}_V(F')$.  Moreover, G\'erardin observes that any other such endomorphism $I'$ is the composition of a convolution operator $\phi \star$ on $\C[\textbf{V}(F')]$ with $I$. If one takes this $\phi$:
$$ \phi(x+x^{\star}):= \psi'(2 \langle x^{\star}, x\rangle) \textrm{ for } x\in \textbf{X}(F'), x^{\star} \in \textbf{X}^{\star}(F'),$$
then the results in [\cite{Gero}, p.85] say that
$$I' \widetilde{\lambda'}(s)={\rho'}^- \otimes \rho'(s) I'  \textrm{ for } s\in \textbf{Sp}_V(F').$$
Moreover, by definition, we see $I'\circ I_{\sigma}=I_{\sigma}\circ I'$, so $I'\cdot\widetilde{ \lambda'}(\sigma)f=I'I_{\sigma} (f)=I_{\sigma} I'(f)={\rho'}^-\otimes \rho'(\sigma) I'(f)$ for $f\in \textbf{V}(F')$,  and  the result follows.
\end{proof}
\begin{corollary}
$\tr\widetilde{\rho'}$ is $0$ outside the conjugates of $\Gamma \ltimes \textbf{Sp}_V(F') \textbf{Z}_V(F')$.
\end{corollary}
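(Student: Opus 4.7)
The plan is to deduce this corollary as an immediate consequence of the preceding proposition, which identifies $\check{\widetilde{\rho'}} \otimes \widetilde{\rho'}$ with the representation $\widetilde{\lambda'} := \Ind_{\Gamma \ltimes \textbf{Sp}_V(F') \textbf{Z}_V(F')}^{\Gamma \ltimes \textbf{Sp}_V(F') \textbf{H}_V(F')} 1$. The whole point is that an induced representation of the trivial character has character supported on conjugates of the inducing subgroup.

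More precisely, I would proceed as follows. Write $\chi = \tr \widetilde{\rho'}$. Since $\Gamma \ltimes \textbf{Sp}_V(F') \textbf{H}_V(F')$ is a finite group, the representation $\widetilde{\rho'}$ is unitarizable and so $\tr \check{\widetilde{\rho'}}(g) = \overline{\chi(g)}$. Thus the character of $\check{\widetilde{\rho'}} \otimes \widetilde{\rho'}$ at an element $g$ equals $|\chi(g)|^2$. On the other hand, writing $G = \Gamma \ltimes \textbf{Sp}_V(F') \textbf{H}_V(F')$ and $H = \Gamma \ltimes \textbf{Sp}_V(F') \textbf{Z}_V(F')$, the Frobenius formula for the induced character gives
$$\tr \widetilde{\lambda'}(g) = \frac{1}{|H|} \bigl|\{x \in G : x^{-1} g x \in H\}\bigr|,$$
which vanishes precisely when $g$ is not conjugate in $G$ to any element of $H$.

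Combining the two computations, $|\chi(g)|^2 = 0$ whenever $g$ lies outside the union of $G$-conjugates of $H$, and therefore $\chi(g) = 0$ there. This is exactly the assertion of the corollary, and there is no real obstacle: the proposition is doing all the work, and this is just the standard passage from ``$\chi \otimes \bar\chi$ is induced from $1_H$'' to ``$\chi$ is supported on conjugates of $H$.''
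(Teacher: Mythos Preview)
Your proposal is correct and is exactly the argument the paper has in mind: the corollary is stated without proof because it follows immediately from the preceding proposition via the identity $\tr(\check{\widetilde{\rho'}}\otimes\widetilde{\rho'})=|\tr\widetilde{\rho'}|^2$ together with the standard fact that the character of $\Ind_H^G 1$ vanishes off the conjugates of $H$. The paper even signals this reasoning explicitly in the opening sentence of \S\ref{SOTCOFTEWR} for the unextended case.
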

\section{Orthogonal decomposition}\label{OD}
Let $V=V_1 \oplus V_2$ be a decomposition of $V$ into the direct orthogonal sum of two symplectic spaces $V_1$ and $V_2$. We then have a group homomorphism $$\textbf{H}_{V_1}(F) \times \textbf{H}_{V_2}(F) \longrightarrow \textbf{H}_V(F)$$
$$[(v_1, k_1), (v_1, k_2)] \longmapsto (v_1 + v_2, k_1 +k_2),$$
and an obvious embedding
$$\textbf{Sp}_{V_1}(F) \times \textbf{Sp}_{V_2}(F) \longrightarrow \textbf{Sp}_V(F),$$
so we get a group homomorphism
$$(\textbf{Sp}_{V_1}(F) \textbf{H}_{V_1}(F) ) \times (\textbf{Sp}_{V_2}(F) \textbf{H}_{V_2}(F) ) \stackrel{\delta}{\longrightarrow} \textbf{Sp}_V(F) \textbf{H}_V(F).$$
It is a result of \cite{Gero} that $\rho\circ \delta$ is isomorphic to the (external) tensor product of the Weil representations $\rho_1$, $\rho_2$ associated to $\psi$ and the symplectic spaces $V_1$, $V_2$.

Over $F'$, we have analogously a group homomorphism
$$\textbf{Sp}_{V_1}(F') \textbf{H}_{V_1}(F') \times \textbf{Sp}_{V_1}(F') \textbf{H}_{V_2}(F') \longrightarrow \textbf{Sp}_{V}(F') \textbf{H}_V(F').$$
It clearly extends to a group homomorphism
$$\delta': \Gamma \ltimes [\textbf{Sp}_{V_1}(F') \textbf{H}_{V_1}(F') \times \textbf{Sp}_{V_2}(F') \textbf{H}_{V_2}(F') ] \longrightarrow \Gamma \ltimes \textbf{Sp}_{V}(F') \textbf{H}_{V}(F'),$$
and the left hand side is a subgroup of $(\Gamma \ltimes \textbf{Sp}_{V_1}(F') \textbf{H}_{V_1}(F') ) \times (\Gamma \ltimes \textbf{Sp}_{V_2}(F') \textbf{H}_{V_2}(F') ) $. We write $\widetilde{\rho'_1}$, $\widetilde{\rho_2'}$ for the extended Weil representations of the two components of that groups.
\begin{proposition}\label{orthognoal}
The representation $\widetilde{\rho'} \circ \delta'$ is isomorphic to the restriction  of $\widetilde{\rho_1'} \otimes \widetilde{\rho_2'}$ to $\Gamma \ltimes [(\textbf{Sp}_{V_1}(F') \textbf{H}_{V_1}(F') ) \times (\textbf{Sp}_{V_2}(F') \textbf{H}_{V_2}(F'))]$.
\end{proposition}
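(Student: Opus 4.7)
The plan is to realize everything concretely in the Schrödinger model constructed in \S \ref{EWR} and to reduce to Gérardin's non-extended version of the statement by observing that the intertwining operator $I_\sigma$ factorises under the orthogonal decomposition.

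First, choose Lagrangian polarisations $V_i = X_i \oplus X_i^\star$ of each $V_i$ ($i=1,2$) and take as polarisation of $V$ the induced one $V = X \oplus X^\star$ with $X = X_1 \oplus X_2$, $X^\star = X_1^\star \oplus X_2^\star$. The obvious map yields a canonical identification
\[
\C[\textbf{X}^\star(F')] \;\cong\; \C[\textbf{X}_1^\star(F')] \otimes \C[\textbf{X}_2^\star(F')],
\]
sending $f_1 \otimes f_2$ to the function $(x_1^\star, x_2^\star) \mapsto f_1(x_1^\star) f_2(x_2^\star)$. Under this identification, Gérardin's theorem recalled at the start of \S \ref{OD}, applied over $F'$, states that $\rho' \circ \delta'$ (on the product group, not involving $\Gamma$) is isomorphic to $\rho_1' \otimes \rho_2'$. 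This is read off from the formulas (\ref{representationsp1})--(\ref{representationsp4}): for an element coming from $\textbf{Sp}_{V_i}(F') \textbf{H}_{V_i}(F')$ via $\delta'$, the corresponding matrices are block diagonal relative to $X = X_1 \oplus X_2$ and $X^\star = X_1^\star \oplus X_2^\star$, the symplectic pairing on $V$ restricts to the one on $V_i$, and the Heisenberg component pairs only with $X_i^\star$; the $\gammaup(\psi')^{-n}\epsilon'(\det c)$ prefactor in (\ref{representationsp4}) splits multiplicatively over the block diagonal.

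Second, and this is the only new point beyond Gérardin, I claim that the operator $I_\sigma$ on $\C[\textbf{X}^\star(F')]$ factorises as $I_\sigma^{(1)} \otimes I_\sigma^{(2)}$, where $I_\sigma^{(i)}$ is the analogous operator for $V_i$ on $\C[\textbf{X}_i^\star(F')]$. Indeed, both summands $\textbf{X}_i^\star(F')$ are defined over $F$, so $\sigma$ preserves the decomposition, and for $f = f_1 \otimes f_2$ and $x^\star = x_1^\star + x_2^\star$,
\[
I_\sigma(f)(x_1^\star, x_2^\star) = f_1(\sigma^{-1} x_1^\star) f_2(\sigma^{-1} x_2^\star) = \bigl(I_\sigma^{(1)} f_1\bigr)(x_1^\star)\, \bigl(I_\sigma^{(2)} f_2\bigr)(x_2^\star).
\]
Combining with the first step, for $g_i \in \textbf{Sp}_{V_i}(F') \textbf{H}_{V_i}(F')$,
\[
\widetilde{\rho'}\bigl(\sigma, \delta'(g_1, g_2)\bigr) = \rho'(\delta'(g_1, g_2))\, I_\sigma = \bigl(\rho_1'(g_1) \otimes \rho_2'(g_2)\bigr)\bigl(I_\sigma^{(1)} \otimes I_\sigma^{(2)}\bigr) = \widetilde{\rho_1'}(\sigma, g_1) \otimes \widetilde{\rho_2'}(\sigma, g_2).
\]
Since $\Gamma = \langle \sigma \rangle$ and both sides are genuine representations, compatibility for the pair $\bigl(\sigma, \delta'(g_1, g_2)\bigr)$ together with compatibility on the non-extended subgroup propagates to all powers of $\sigma$, and hence gives the desired isomorphism on $\Gamma \ltimes [(\textbf{Sp}_{V_1}(F') \textbf{H}_{V_1}(F')) \times (\textbf{Sp}_{V_2}(F') \textbf{H}_{V_2}(F'))]$.

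I do not foresee a serious obstacle. The main conceptual content is the factorisation of $I_\sigma$, which is immediate from its defining formula once one works with a polarisation compatible with the orthogonal decomposition. The only points needing care are bookkeeping between external and internal tensor products and checking that the prefactor $\gammaup(\psi')^{-n}\epsilon'(\det c)$ in (\ref{representationsp4}) is multiplicative with respect to the block decomposition; both are routine.
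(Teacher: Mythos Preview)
Your argument is correct but takes a different route from the paper. The paper's proof is a two-line trace comparison: it invokes G\'erardin for the restriction to $(\textbf{Sp}_{V_1}(F')\textbf{H}_{V_1}(F'))\times(\textbf{Sp}_{V_2}(F')\textbf{H}_{V_2}(F'))$, observes that this restriction is irreducible (so any two extensions to the semi-direct product with $\Gamma$ differ by a character of $\Gamma$), and then checks that both sides have trace $q^n=q^{n_1}q^{n_2}$ at $\sigma$, forcing the twisting character to be trivial. You instead work directly in compatible Schr\"odinger models and verify that the operator $I_\sigma$ factorises as $I_\sigma^{(1)}\otimes I_\sigma^{(2)}$, so that the identification $\C[\textbf{X}^\star(F')]\cong\C[\textbf{X}_1^\star(F')]\otimes\C[\textbf{X}_2^\star(F')]$ is an explicit intertwiner for the extended representations. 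The paper's argument is shorter and model-free, and it is the same uniqueness trick reused in the proof of Proposition~\ref{inductionproposition}; your argument is more constructive and exhibits the intertwining operator concretely, at the cost of checking (as you note) that the prefactors $\epsilon'(\det a)$ and $\gammaup(\psi')^{-n}\epsilon'(\det c)$ split multiplicatively under the block decomposition.
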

\begin{proof}
On restriction to $(\textbf{Sp}_{V_1}(F') \textbf{H}_{V_1}(F') ) \times (\textbf{Sp}_{V_2}(F') \textbf{H}_{V_2}(F'))$, that is the  above mentioned result of G\'erardin. To compare
the two extensions to the semi-direct product with $\Gamma$, it is enough to compare the traces at $\sigma$(provided they are non-zero). If $\dim V_1=2n_1$, and $\dim V_2=2n_2$, the trace of $\widetilde{\rho'} \circ \delta'$ at $\sigma$ is $q^n$, and the trace of $\widetilde{\rho'_1} \otimes \widetilde{\rho'_2}$ at $(\sigma, \sigma)$ is $q^{n_1} q^{n_2}=q^n$.
\end{proof}
\section{Restriction to a proper parabolic subgroup}\label{RTAPPS}
Let now $V_+$ be a non-trivial isotropic subspace of $V$, and $V_0$ the symplectic space $V_+^{\perp}/{V_+}$. Write $P$ for parabolic subgroup of $Sp_V$ which is the stabilizer of $V_+$.  Write the corresponding linear algebraic groups over $F$ as $\textbf{V}_+$, $\textbf{V}$, $\textbf{V}_0$ $\textbf{P}$, $\textbf{Sp}_V$. Then  we have an exact sequence  of algebraic groups
$$1 \longrightarrow \textbf{U} \longrightarrow \textbf{P}  \longrightarrow \textbf{GL}_{V_+} \times \textbf{Sp}_{V_0} \longrightarrow 1,$$
where $\textbf{U}$ is the unipotent radical of $\textbf{P}$ and the homomorphisms $\textbf{P} \longrightarrow \textbf{GL}_{V_+}$, $\textbf{P} \longrightarrow \textbf{Sp}_{V_0}$ are given by the induced actions on $\textbf{V}_+$, $\textbf{V}_+^{\perp}$. Note that when $V_+$ is a maximal isotropic subspace, $\textbf{V}_0=\{ 0\}$ and $\textbf{Sp}_{V_0}$ is just the trivial group.

 Let $H_{\perp}$ be the inverse image of $V^{\perp}_+$ in $H_V$, then the Heisenberg group $H_{V_0}$ appears  as the quotient of $H_{\perp}$ by the subgroup $V_+ \oplus 0$. Of course, $P$ stabilizes $H_{\perp}$. All are viewed as algebraic groups over $F$, and denoted by the bold letters. It follows that  we have a natural homomorphism
$$\textbf{P} \,\textbf{H}_{\perp} \stackrel{\textbf{$\eta$}}{\longrightarrow} \textbf{GL}_{V_+} \times (\textbf{Sp}_{V_0} \textbf{H}_{V_0}).$$

Writing $\rho_0$ for the Weil representation of $\textbf{Sp}_{V_0}(F) \textbf{H}_{V_0}(F)$ associated to $\psi$( the trivial representation of the trivial group if $\textbf{V}_0(F)=0$), and $\epsilon$ for the unique non-trivial quadratic character $F^{\times}$, it is a result of G\'erardin[\cite{Gero}, Theorem 2.4 ] that the restriction of $\rho$ to $\textbf{P}(F) \textbf{H}_{V}(F)$ is induced from the representation of $\textbf{P}(F)\textbf{H}_{\perp}(F)$ obtained by (composing with $\eta$ over $F$) the representation given by  $\epsilon\circ \det$  on $\textbf{GL}_{V_+}(F)$ and by $\rho_0$ on $\textbf{Sp}_{V_0}(F) \textbf{H}_{V_0}(F)$.

Now \textbf{$\eta$} over $F'$ readily  extends to a homomorphism
$$\eta': \Gamma \ltimes \Big( \textbf{P}(F') \textbf{H}_{ \perp}(F') \Big)  \longrightarrow \Gamma \ltimes \Big( \textbf{GL}_{V_+}(F') \textbf{Sp}_{V_0}(F') \textbf{H}_{V_0}(F')\Big),$$
and the  group on the right is a subgroup of $\Big(\Gamma\ltimes \textbf{GL}_{V_+}(F') \Big) \times \Big(\Gamma \ltimes \textbf{Sp}_{V_0}(F') \textbf{H}_{V_0}(F')\Big)$.

 Now $\widetilde{\epsilon'} \circ \det: \Gamma \ltimes \textbf{GL}_{V_+}(F') \longrightarrow \C^{\times}; (\tau, g) \longmapsto  \epsilon'(\det(g))$ is a character of $\Gamma \ltimes \textbf{GL}_{V_+}(F')$. We can take the product  of $\widetilde{\epsilon'} \circ \det$ with  the extended Weil representation $\widetilde{\rho_0'}$ of $\Gamma \ltimes \textbf{Sp}_{V_0}(F') \textbf{H}_{V_0}(F')$ associated to $\psi'$ (or the trivial representation of $\Gamma$ if $\textbf{V}_0=\{ 0\}$), and restrict to $\Gamma \ltimes \Big( \textbf{GL}_{V_+}(F') \times \textbf{Sp}_{V_0}(F') \textbf{H}_{V_0}(F')\Big)$ to get a representation, written $\epsilon' \widetilde{\rho_0'}$.

\begin{proposition}\label{inductionproposition}
The restriction of $\widetilde{\rho'}$ to $\Gamma \ltimes \textbf{P}(F')\textbf{H}_V(F')$ is induced from the representation $\epsilon' \widetilde{\rho_0}$ composed with $\eta'$.
\end{proposition}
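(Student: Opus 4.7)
The plan is to mirror the strategy of Proposition \ref{orthognoal}: first identify the two candidate representations upon restriction to $\textbf{P}(F')\textbf{H}_V(F')$ by invoking G\'erardin's Theorem 2.4 of \cite{Gero} verbatim over $F'$ in place of $F$. This gives
\[
\rho'|_{\textbf{P}(F')\textbf{H}_V(F')} \;\cong\; \pi'|_{\textbf{P}(F')\textbf{H}_V(F')},
\]
where $\pi' := \Ind_{\Gamma\ltimes \textbf{P}(F')\textbf{H}_{\perp}(F')}^{\Gamma\ltimes \textbf{P}(F')\textbf{H}_V(F')}(\epsilon'\widetilde{\rho_0'}\circ\eta')$. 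Consequently both $\widetilde{\rho'}|_{\Gamma\ltimes \textbf{P}(F')\textbf{H}_V(F')}$ and $\pi'$ are extensions of a common representation of $\textbf{P}(F')\textbf{H}_V(F')$ to the semi-direct product with $\Gamma$, and, as in the proof of Proposition \ref{orthognoal}, it will suffice to check that their traces at $(\sigma,1)$ coincide and are non-zero.

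For $\widetilde{\rho'}$, the construction via $I_\sigma$ in \S \ref{EWR} immediately yields $\tr\widetilde{\rho'}(\sigma,1) = q^n$, where $2n=\dim V$.

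For $\pi'$, I would apply the standard induced-character formula. A direct computation with the semi-direct product law gives $(\tau,p)^{-1}(\sigma,1)(\tau,p) = (\sigma,\tau^{-1}(p^{-1}\sigma(p)))$, so the condition that this conjugate lie in $\Gamma\ltimes \textbf{P}(F')\textbf{H}_{\perp}(F')$ is independent of $\tau$ and depends only on the right coset of $p$ modulo $\textbf{P}(F')\textbf{H}_{\perp}(F')$. It selects exactly the $\sigma$-fixed cosets in $\textbf{H}_V(F')/\textbf{H}_{\perp}(F') \cong \textbf{V}(F')/\textbf{V}_+^{\perp}(F')$. By Lang's theorem (trivial here, as the quotient is a commutative unipotent group), this $\sigma$-fixed set is in bijection with $\textbf{V}(F)/\textbf{V}_+^{\perp}(F)$, of cardinality $q^{n_+}$ with $n_+=\dim V_+$. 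Choosing coset representatives in $\textbf{V}(F)$ makes $p^{-1}\sigma(p)=1$, and each summand reduces to
\[
\tr(\epsilon'\widetilde{\rho_0'}\circ\eta')(\sigma,1) = \widetilde{\epsilon'}(\det(1))\cdot\tr\widetilde{\rho_0'}(\sigma,1) = 1\cdot q^{n_0},
\]
with $n_0 = n - n_+$ (so $2n_0 = \dim V_0$). Summing gives $\tr\pi'(\sigma,1) = q^{n_+}q^{n_0} = q^n$, matching the previous calculation.

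Both traces equal $q^n \neq 0$; by the Clifford-theoretic step already used in Proposition \ref{orthognoal}, the two extensions coincide, which is the desired conclusion. The main obstacle will be the careful Mackey/Lang-Steinberg bookkeeping in the trace calculation for $\pi'$: verifying that $H$-conjugation invariance inside $\Gamma\ltimes \textbf{P}(F')\textbf{H}_{\perp}(F')$ together with right $\textbf{P}(F')\textbf{H}_{\perp}(F')$-invariance reduces the character sum to one indexed by $\sigma$-fixed cosets of $\textbf{V}_+^{\perp}(F')$ in $\textbf{V}(F')$, with normalizing factors cancelling cleanly.
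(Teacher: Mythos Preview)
Your proposal is correct and follows essentially the same approach as the paper: invoke G\'erardin's result over $F'$ to identify the restrictions to $\textbf{P}(F')\textbf{H}_V(F')$, then compare traces at $\sigma$ by computing the induced character via coset representatives in $\textbf{H}_V(F')/\textbf{H}_{\perp}(F')$, observing that only the $\sigma$-fixed cosets contribute and that these may be represented by elements of $\textbf{H}_V(F)$, yielding $q^{n_+}\cdot q^{n_0}=q^n$. The paper's write-up is slightly terser (it takes coset representatives directly in $\textbf{H}_V(F')$ rather than general $(\tau,p)$, and does not name Lang's theorem explicitly), but the argument is the same.
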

\begin{proof}
On restriction to $\textbf{P}(F')\textbf{H}_V(F')$, that is the above mentioned result of G\'erardin. As in \S \ref{OD}, it is enough to compute the trace at $\sigma$. So we need to check that the trace of the induced representation   at $\sigma$  is indeed $q^n$. Now the cosets in $\Gamma \ltimes \textbf{P}(F') \textbf{H}_V(F') /{\Gamma \ltimes \textbf{P}(F') \textbf{H}_{\perp}(F')}$ are represented by $\textbf{H}_V(F')/{\textbf{H}_{\perp}(F')}$ and  for $h\in \textbf{H}_V(F')$, we have
$$(1, h)(\sigma, 1) (1, h^{-1})= (\sigma, h \sigma( h^{-1}))= (\sigma, 1) (1,  \sigma^{-1}(h) h^{-1}),$$
which belongs to $\Gamma \ltimes \textbf{P}(F') \textbf{H}_{\perp}(F')$  only if $h$  is fixed by $\sigma$ modulo $ \textbf{H}_{\perp}(F'))$; but then this means that we can take $h$ to be in $\textbf{H}_V(F)$, in which case, $(1, h)(\sigma, 1) (1, h^{-1})=(\sigma,1)$. All in all, the trace of the induced representation at $\sigma$ is $|\textbf{H}_V(F)/{\textbf{H}_{\perp}(F)}|\cdot q^{n_0}$ with $\dim V_0=2n_0$; since $| \textbf{H}_V(F)/{\textbf{H}_{\perp}(F)}|=|\textbf{V}_+(F)|=q^{n-n_0}$, we get the desired result.
\end{proof}
\section{Reductions}\label{reduction}
We now start the proof of equality ($\star$) in the main theorem. We proceed by induction on $n=\frac{1}{2}\dim_{F}V$. The case where $n=0$ being entirely trivial,  we assume $n> 0$. We fix $i$ and $t$, and as remarked in \S \ref{NM}, we may and do assume that $i$ is prime to $m$, so $d=1$.
We have to prove ($\star$) for a fixed $g$, or equivalently for a fixed norm $h=\nnn_{i,t}(\sigma^i, g)$. \\

If $h$ does not belong to $\textbf{Sp}_{V}(F)\textbf{Z}_{V}(F)$, then $(\sigma^i, g)$ is not conjugate to $\sigma^i \ltimes \textbf{Sp}_{V}(F') Z_V(F')$. In that case, the equality ($\star$) is $0=0$ by the result of \S \ref{SOTCOFTEWR}. So we may assume that  $h$ belongs to $\textbf{Sp}_V(F)\textbf{Z}_V(F)$. But $\textbf{Z}_V(F')$ acts in $\widetilde{\rho'}$ via the character $\psi'$, so on $\Gamma\ltimes \textbf{Z}_V(F')$, the character relation ($\star$) is immediate. Applying  Lemma \ref{lemmaofGyjo} (iii) to the product $\textbf{Sp}_V(F) \textbf{Z}_V(F)$, we see that we may assume that $h$ belongs to $\textbf{Sp}_V(F)$, and $g$ to $\textbf{Sp}_V(F')$.\\

If $h$ stabilizes a non-trivial decomposition $V=V_1 \oplus V_2$ as in \S \ref{OD}, it acts on $\textbf{V}(F)$ via $(h_1, h_2)$ with $h_1 \in \textbf{Sp}_{V_1}(F)$, $h_2 \in \textbf{Sp}_{V_2}(F)$. By  Lemma \ref{lemmaofGyjo} (iii) and  Proposition \ref{orthognoal}, the equality ($\star$) comes from the induction hypothesis applied to $V_1$ and $h_1$ in $\textbf{Sp}_{V_1}(F)$, and  to $V_2$ and $h_1$ in $\textbf{Sp}_{V_2}(F)$.\\

If $h$ stabilizes a non-trivial totally isotropic subspace $V_+$ of $V$, then it belongs to the group $\textbf{P}(F)$ of \S \ref{RTAPPS}, and we can take $g$ in $\textbf{P}(F')$, write $(g_+, g_0)$ for the projection of $g$ to $\textbf{GL}_{V_+}(F') \times \textbf{Sp}_{V_0}(F')$, and similarly $(h_+, h_0)$ for $h$.  We note that
$$\tr\widetilde{\rho_0'}(\sigma^i, g_0)= \tr\rho_0(h_0)$$
  by the induction hypothesis and
  $$\widetilde{\epsilon'}(\det \, g_+)= \epsilon(\det\, h_+)$$
    directly.  The equality ($\star$) for $(g,h)$ then comes from  Proposition \ref{inductionproposition} and Lemma \ref{lemmaofGyjo} applied to the induction from $\textbf{P}\textbf{H}_{\perp}$ to $\textbf{P}\textbf{H}_V$.\\

So the only remaining case  is when $h$ stabilizes no non-trivial orthogonal decomposition $V=V_1 \oplus V_2$, and stabilizes no non-trivial isotropic subspace $V_+$ of $V$. Let us analyze that case. Let $h=su$ be the Jordan decomposition of $h$ into a semi-simple part $s$ and a unipotent part $u$, with $su=us$. Then $F[s]$ is a semi-simple commutative subalgebra of $\End_F(V)$, and the adjoint involution on $\End_F(V)$ associated to the symplectic form on $V$ induces $s \longmapsto s^{-1}$ on $F[s]$.\\

Writing $F[s]$ as a product of fields $(F_{\alpha})_{\alpha \in A}$, we accordingly have a decomposition of $V$ as a direct sum $V= \oplus_{\alpha \in A}V_{\alpha}$, where $F[s]$ acts on $V_{\alpha}$ via $F_{\alpha}$. The involution $s \longmapsto  s^{-1}$ gives a permutation $\alpha \longmapsto  \overline{\alpha}$ on $A$, together with isomorphisms $F_{\alpha} \simeq F_{\overline{\alpha}}$,  and the orthogonal $V_{\alpha}^{\perp}$ of $V_{\alpha}$ is $\oplus_{\beta \neq \alpha} V_{\beta}$. \\

Assume first that $A$ has at least two elements, and take $\alpha$ in $A$. If $\alpha= \overline{\alpha}$ then $V$ is the orthogonal direct sum of $V_{\alpha}$ and $\oplus_{\beta\neq \alpha} V_{\beta}$; each of those subspaces is stable under $s$ and $u$, hence under $h$, which contradicts our assumption on $h$. If $\alpha \neq \overline{\alpha}$, then $h$ stabilizes the non-trivial totally isotropic subspace $V_{\alpha}$, which again contradicts our assumption on $h$.\\

So we see that $A$ has only one element, say $\alpha= \overline{\alpha}$. So $F[s]$ is  a field $E$, and $u$ is an $E$-linear endomorphism of $V$, the involution $s \longmapsto s^{-1}$ on $E$  has a fixed subfield $E_{+}$.

 Assume first that $E=E_+$, i.e. $s=\pm 1$, which implies $E=F$; then $\Ker(u-1_V)$ is a non-trivial subspace of $V$, and any line in that subspace is isotropic and stable under $h$, again a contradiction. We conclude that $E$ is a quadratic extension of $E_+$; then there exists  a skew-hermitian form $\varphi$ on the $E$-vector space $V$---skew-hermitian with respect to $E/{E_+}$ such that, for $v, v'$ in $V$,
$$\langle v, v'\rangle = \tr_{E_+/F}\big( \varphi (v, v')\big).$$
Then $s$ acts on $V$ as an element of $E^{\times}$ with norm $1$ to $E_+$, and $u$ acts as a unipotent element of the unitary group associated to $\varphi$. Now the kernel of $u-1_V$ is orthogonal to its image. If $u \neq 1_V$,  then the intersection $\Im(u-1_V) \cap \Ker(u-1_V)$ is a non-zero isotropic subspace of $V$ stable under $h=su$.\\

 So we conclude that $u=1_V$ and that the $E$-vector space $V$ contains no isotropic non-zero vector with respect to $\varphi$: that implies $V$ has  dimension $1$ over $E$. This very special case will be treated  in the next \S \ref{TCOS} and \S \ref{EOP}.

\section{The case of $SL_2$}\label{TCOS}
We keep the preceding notation, and write $\textbf{U}_{\varphi}$ for the unitary group of $\varphi$ seen as an algebraic group over $E_{+}$ , and $\textbf{T}$ for its restriction of scalars from $E_+$ to $F$. Thus $\textbf{T}$ is a maximally elliptic torus of $\textbf{Sp}_V$ over $F$, and $\textbf{T}(F)=\textbf{U}_{\varphi}(E_+)$  is the group $E^1$  of elements of $E$ with norm $1$ to $E_+$.

Let $\omega$ be the non-trivial character of $\textbf{T}(F)$ of order $2$, and $\omega \psi$ the character of $\textbf{T}(F)\textbf{Z}_V(F)$ given by $\omega$ on $\textbf{T}(F)$ and $\psi$ on $\textbf{Z}_{V}(F)$.

\begin{proposition}\label{sl2}
The virtual representation $\nu=\Ind_{\textbf{H}_V(F)}^{\textbf{T}(F) \textbf{H}_V(F)} (\rho|_{\textbf{H}_V(F)} ) -\Ind_{\textbf{T}(F)\textbf{Z}_V(F)}^{\textbf{T}(F)\textbf{H}_V(F)} \omega\psi$ is the restriction of $\rho$ to $\textbf{T}(F) \textbf{H}_V(F)$.
\end{proposition}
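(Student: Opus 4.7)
\emph{Proof plan.} The argument is character-theoretic on $\textbf{T}(F)\textbf{H}_V(F)$. Put $\tau=\rho|_{\textbf{T}(F)\textbf{H}_V(F)}$. Since $V$ has $F$-dimension $2$, the restriction $\rho|_{\textbf{H}_V(F)}$ is the unique irreducible representation of $\textbf{H}_V(F)$ with central character $\psi$, and is therefore invariant under $\textbf{T}(F)$-conjugation. Clifford theory then shows that the irreducible representations of $\textbf{T}(F)\textbf{H}_V(F)$ with central character $\psi$ are exactly the twists $\tau\otimes\chi$, for $\chi$ a character of $\textbf{T}(F)$ pulled back to $\textbf{T}(F)\textbf{H}_V(F)$; each has dimension $q$.

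Frobenius reciprocity then yields
$$\Ind_{\textbf{H}_V(F)}^{\textbf{T}(F)\textbf{H}_V(F)}\bigl(\rho|_{\textbf{H}_V(F)}\bigr)\;\cong\;\bigoplus_{\chi}\tau\otimes\chi,$$
each twist appearing with multiplicity one since $(\tau\otimes\chi)|_{\textbf{H}_V(F)}\cong\rho|_{\textbf{H}_V(F)}$ is irreducible (the sum runs over all characters $\chi$ of $\textbf{T}(F)$), and
$$\Ind_{\textbf{T}(F)\textbf{Z}_V(F)}^{\textbf{T}(F)\textbf{H}_V(F)}(\omega\psi)\;\cong\;\bigoplus_{\chi}\langle\omega\chi^{-1},\,\tau|_{\textbf{T}(F)}\rangle\;\tau\otimes\chi,$$
the multiplicity being computed using that $\textbf{T}(F)$ and $\textbf{Z}_V(F)$ commute, that $\textbf{Z}_V(F)$ acts on $\tau\otimes\chi$ via $\psi$, and that $\textbf{T}(F)$ acts via $\tau|_{\textbf{T}(F)}\otimes\chi$. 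Subtracting, the claim $\nu=\tau$ becomes the requirement
$$\langle\omega\chi^{-1},\,\tau|_{\textbf{T}(F)}\rangle=1-\delta_{\chi,1}\quad\text{for every character $\chi$ of $\textbf{T}(F)$},$$
equivalently, $\tau|_{\textbf{T}(F)}$ is the regular representation of $\textbf{T}(F)$ minus the character $\omega$; at the level of characters, this is the single assertion
$$\tr\rho(t)=-\omega(t)\qquad\text{for every }t\in\textbf{T}(F)\setminus\{1\},$$
the value at $t=1$ being automatic since $\dim\tau=q=|\textbf{T}(F)|-1$.

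It then remains to verify this last character formula, for which I would pass to the Schr\"odinger model (\ref{representationsp1})--(\ref{representationsp4}). A regular element $t\in\textbf{T}(F)=E^1$ is elliptic in $\textbf{Sp}_V(F)\cong\SL_2(F)$ and, in particular, not conjugate to a diagonal element; I would therefore write $t$ in a Bruhat-type factorisation involving the Weyl element $\bigl(\begin{smallmatrix}0&1\\-1&0\end{smallmatrix}\bigr)$, apply formula (\ref{representationsp4}), and collapse the resulting double sum to a quadratic Gauss sum over $F$.

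The main obstacle is the final identification: the sign produced by that Gauss sum, involving the Weil index $\gamma(\psi)$ and the quadratic character $\epsilon$ of $F^{\times}$ appearing in (\ref{representationsp4}), must match $-\omega(t)$, where $\omega$ is the unique nontrivial quadratic character of the cyclic group $E^1$ of order $q+1$. This explicit matching is precisely what \S \ref{EOP} is devoted to.
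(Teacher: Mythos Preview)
Your Clifford--Frobenius framework is exactly the paper's: decompose the first induced representation as $\bigoplus_{\chi}\chi\otimes\tau$, compute the multiplicity of each $\chi\otimes\tau$ in the second induced representation as $\langle\omega,\chi\otimes\tau|_{\textbf{T}(F)}\rangle$, and reduce the statement to the assertion that $\tau|_{\textbf{T}(F)}$ is the direct sum of all characters of $\textbf{T}(F)$ other than $\omega$ (equivalently, $\tr\rho(t)=-\omega(t)$ for $t\neq 1$).

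The one divergence is in how that last assertion is obtained. The paper does not compute it; it simply quotes G\'erardin [\cite{Gero}, p.~73], where the decomposition of $\rho|_{\textbf{T}(F)}$ is already established. Your plan to extract it instead from the Schr\"odinger model via a Bruhat factorisation and a Gauss sum is perfectly viable, and is in fact how such formulas are usually derived; but your claim that ``this explicit matching is precisely what \S\ref{EOP} is devoted to'' is mistaken. Section~\ref{EOP} does nothing of the sort: it takes Proposition~\ref{sl2} (and its $F'$-analogues from \S\ref{TCOS}) as already proved and uses restriction of scalars and tensor induction to reduce the general~$n$ case of $(\star\star)$ back to the two-dimensional case. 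So if you want a self-contained proof along your lines, you must actually carry out the Gauss-sum computation yourself rather than defer it; otherwise, do as the paper does and cite G\'erardin.
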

\begin{proof}
The first term of the virtual representation $\nu$ is the sum of the inequivalent irreducible representations $\varphi \rho|_{\textbf{T}(F) \textbf{H}_V(F)}$ where $\varphi$ runs through all characters of $\textbf{T}(F)$. For such a character $\varphi$, the multiplicity of $\varphi \rho|_{\textbf{T}(F) \textbf{H}_V(F)}$ in the second term of the virtual representation is the multiplicity of $\omega$ in $\varphi \rho|_{\textbf{T}(F)}$. But it follows from [\cite{Gero}, p.73] that $\rho|_{\textbf{T}(F)}$ is the direct sum of the characters of $\textbf{T}(F)$ distinct from $\omega$, hence the result.
\end{proof}
The situation we are reduced to is the following: we have an element $s$ of $\textbf{T}(F)$ which is the norm from $F'$ to $F$ of some element $s'$ of $\textbf{T}(F')$( note that $\textbf{T}$ is commutative), and  we want to show that
$$(\star\star ) \quad \quad \tr\widetilde{\rho'}(\sigma^i, s')= \tr\rho(s),$$
for any integer $i$, $0 \leq i \leq m-1$, prime to $m$. Note also that $E=F[s]$ so that in particular $s$ and $s'$ are not $1$.

It is tempting to try and prove it via a proposition similar to the above, but for $\widetilde{\rho'}|_{\Gamma \ltimes \textbf{T}(F') \textbf{H}_V(F')}$. That is not so straightforward, essentially because the $F'$-algebra $E \otimes_FF'$ is generally no longer a field. In this section, we will treat the case $n=1$ so that $E_+=F$; the general case will be dealt with in \S \ref{EOP}.\\

First we assume that $m$ is odd; then $E \otimes_FF'$ is a field $E'$---a quadratic extension of $F'$. We denote by $\omega'$ the order $2$ character of $\textbf{T}(F')$, which is simply $\omega$ composed with the norm from $\textbf{T}(F')$ to $\textbf{T}(F)$, since that norm is surjective.
\begin{proposition}\label{oddn1}
Assume $n=1$ and $m$ odd. Then the virtual representation
$$\nu'= \Ind^{\Gamma \ltimes \textbf{T}(F') \textbf{H}_V(F')}_{\Gamma \ltimes \textbf{H}_V(F')} \big( \widetilde{\rho'}|_{\Gamma\ltimes\textbf{H}_V(F')}\big) - \Ind_{\Gamma \ltimes \textbf{T}(F') \textbf{Z}_V(F')}^{\Gamma \ltimes \textbf{T}(F') \textbf{H}_V(F')} \big( \widetilde{\omega' \psi'}\big) $$
is the restriction of $\widetilde{\rho'}$ to $\Gamma \ltimes \textbf{T}(F') \textbf{H}_V(F')$. Here $\widetilde{\omega' \psi'}$ is the character of $\Gamma \ltimes \textbf{T}(F') \textbf{Z}_V(F')$ obtained by extending $\omega' \psi'$ trivially on $\Gamma$.
\end{proposition}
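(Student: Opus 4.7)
\medskip

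\noindent\emph{Proof plan.} The plan is to adapt the proof of Proposition~\ref{sl2} to the $\Gamma$-equivariant setting. First I would verify the non-equivariant identity
\[
\rho'|_{\textbf{T}(F')\textbf{H}_V(F')} = \Ind_{\textbf{H}_V(F')}^{\textbf{T}(F')\textbf{H}_V(F')}(\rho'|_{\textbf{H}_V(F')}) - \Ind_{\textbf{T}(F')\textbf{Z}_V(F')}^{\textbf{T}(F')\textbf{H}_V(F')}(\omega'\psi')
\]
by directly invoking Proposition~\ref{sl2} with $F$ replaced by $F'$. The hypothesis that $m$ is odd is essential here: it guarantees that $E' = E\otimes_F F'$ is a quadratic field extension of $F'$, so $\textbf{T}(F') = E'^{1}$ is cyclic of order $q^m+1$ with $\omega'$ its unique non-trivial quadratic character, and G\'erardin's identification of $\rho'|_{\textbf{T}(F')}$ with the sum of all characters of $\textbf{T}(F')$ other than $\omega'$ applies verbatim. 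Hence $\nu'|_{\textbf{T}(F')\textbf{H}_V(F')} = \rho'|_{\textbf{T}(F')\textbf{H}_V(F')}$, which is irreducible.

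The core task is to upgrade this identity to $\Gamma\ltimes\textbf{T}(F')\textbf{H}_V(F')$. I would use $\Gamma$-equivariant Frobenius reciprocity to compute the multiplicity in $V_1 := \Ind^{\Gamma\ltimes\textbf{T}(F')\textbf{H}_V(F')}_{\Gamma\ltimes\textbf{H}_V(F')}(\widetilde{\rho'}|_{\Gamma\ltimes\textbf{H}_V(F')})$ and $V_2 := \Ind^{\Gamma\ltimes\textbf{T}(F')\textbf{H}_V(F')}_{\Gamma\ltimes\textbf{T}(F')\textbf{Z}_V(F')}(\widetilde{\omega'\psi'})$ of each irreducible of $\Gamma\ltimes\textbf{T}(F')\textbf{H}_V(F')$. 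The two key structural inputs are: (a) the character $\omega'\psi'$ does \emph{not} occur in $\rho'|_{\textbf{T}(F')\textbf{Z}_V(F')} = \bigoplus_{\eta\ne\omega'}\eta\psi'$, so by Frobenius the multiplicity of every $\Gamma$-equivariant extension of $\rho'|_{\textbf{T}(F')\textbf{H}_V(F')}$ in $V_2$ is zero; (b) $\widetilde{\rho'}|_{\Gamma\ltimes\textbf{H}_V(F')}$ is irreducible (extending the irreducible Heisenberg representation), so among the $\widehat{\Gamma}$-many extensions of $\rho'|_{\textbf{T}(F')\textbf{H}_V(F')}$, only $\widetilde{\rho'}|_{\Gamma\ltimes\textbf{T}(F')\textbf{H}_V(F')}$ itself appears in $V_1$ with multiplicity one. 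Combined with the non-equivariant cancellation of all $\varphi\ne1$ components, this forces $\nu' = \widetilde{\rho'}|_{\Gamma\ltimes\textbf{T}(F')\textbf{H}_V(F')}$.

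As a concrete sanity check, I would compute the trace of both sides at $(\sigma, 1)$. From \S\ref{EWR}, $\tr\widetilde{\rho'}(\sigma, 1) = q^n = q$. By the Frobenius trace formula, for $V_1$ the conjugate $(1,t)^{-1}(\sigma,1)(1,t) = (\sigma, t^{-1}\sigma(t))$ lies in $\Gamma\ltimes\textbf{H}_V(F')$ precisely when $t^{-1}\sigma(t)\in\textbf{T}(F')\cap\textbf{H}_V(F')=\{1\}$, i.e.\ $t\in\textbf{T}(F)$, giving contribution $|\textbf{T}(F)|\cdot q=(q+1)q$; for $V_2$ the conjugate $(1,(v,0))^{-1}(\sigma,1)(1,(v,0)) = (\sigma,(\sigma(v)-v,-\tfrac12\langle v,\sigma(v)\rangle))$ lies in $\Gamma\ltimes\textbf{T}(F')\textbf{Z}_V(F')$ precisely when $v\in\textbf{V}(F)$ (and then collapses to $(\sigma,1)$ because the symplectic form is alternating), giving contribution $|\textbf{V}(F)|\cdot\widetilde{\omega'\psi'}(\sigma,1)=q^2$. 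The difference $(q+1)q - q^2 = q$ matches $\widetilde{\rho'}$, as expected.

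The principal obstacle is the Clifford-theoretic bookkeeping in step (b), where one must also check that extensions of $\varphi\rho'|_{\textbf{T}(F')\textbf{H}_V(F')}$ for $\varphi$ in a non-trivial $\sigma$-orbit, as well as for the fixed character $\varphi=\omega'$ (which is $\sigma$-invariant since $\omega'=\omega\circ N$ and the norm commutes with $\sigma$), contribute with cancelling multiplicities to $V_1-V_2$. This is organised orbit by orbit and relies on the same vanishing of the $\omega'\psi'$-isotypic component of $\rho'|_{\textbf{T}(F')\textbf{Z}_V(F')}$ that drives the non-equivariant Proposition~\ref{sl2}. An alternative, more pedestrian route---avoiding Clifford theory altogether---is to verify the character identity directly at every point $(\sigma^i,g)$ with $i$ coprime to $m$ and $g\in\textbf{T}(F')\textbf{H}_V(F')$, combining the Schr\"odinger-model formulas of \S\ref{EWR} for $\widetilde{\rho'}$ with Frobenius trace formulas for $V_1,V_2$; the computation done here at $(\sigma,1)$ is the prototype.
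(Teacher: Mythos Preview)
Your claims (a) and (b) are both correct, but the step ``Combined with the non-equivariant cancellation of all $\varphi\ne1$ components, this forces $\nu' = \widetilde{\rho'}$'' is where the plan breaks down. Non-equivariant cancellation only gives $\nu'|_{\textbf{T}(F')\textbf{H}_V(F')} = \rho'|_{\textbf{T}(F')\textbf{H}_V(F')}$; it does not rule out virtual pieces of the form $\pi - \chi\pi$ (with $\chi \in \widehat{\Gamma}$ non-trivial) inside $\nu'$, since those restrict to zero. Concretely: for each of the $q+1$ $\sigma$-fixed characters $\varphi$ of $\textbf{T}(F')$ (those factoring through the norm to $\textbf{T}(F)$), the representation $\widetilde{\varphi}\,\widetilde{\rho'}$ appears in $V_1$ with multiplicity one, because its restriction to $\Gamma\ltimes\textbf{H}_V(F')$ coincides with $\widetilde{\rho'}|_{\Gamma\ltimes\textbf{H}_V(F')}$. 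For $\varphi\ne1$ you must then show that the \emph{same} extension, not a $\widehat\Gamma$-twist of it, occurs in $V_2$. By Frobenius this amounts to checking that $I_\sigma$ acts as $+1$ on the one-dimensional $\varphi^{-1}\omega'$-isotypic line of $\rho'|_{\textbf{T}(F')}$---in other words, that $I_\sigma$ fixes each $\sigma$-stable weight line of $\rho'|_{\textbf{T}(F')}$. But that statement is essentially the character identity on $\Gamma\ltimes\textbf{T}(F')$ you are trying to establish. Your ``principal obstacle'' paragraph flags $\varphi=\omega'$ and the non-fixed orbits but not these $q-1$ other fixed twists, and no mechanism is offered to resolve them.

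The paper sidesteps this circularity. Instead of decomposing into constituents, it proves $\langle\nu',\nu'\rangle=1$ via Gyoja's Lemma~\ref{lemmaofGyjo}: both building blocks of $\nu'$---the restriction $\widetilde{\rho'}|_{\Gamma\ltimes\textbf{H}_V(F')}$ (handled by \S\ref{SOTCOFTEWR} together with the trivial relation on $\textbf{Z}_V$) and the abelian character $\widetilde{\omega'\psi'}$---already satisfy the Shintani relation, so compatibility of $\mathcal{N}_{i,t}$ with induction shows that $\nu'|_{\sigma^i\ltimes\textbf{T}(F')\textbf{H}_V(F')}$ is the lift of the Proposition~\ref{sl2} virtual character over $F_{\gcd(m,i)}$, hence of $\rho_{\gcd(m,i)}$. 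The isometry in Lemma~\ref{lemmaofGyjo}(i) then gives $\langle\nu',\nu'\rangle=1$; since $\dim\nu'>0$, $\nu'$ is an honest irreducible, and your (correct) trace computation at $\sigma$ giving $q$ is exactly what pins it down as $\widetilde{\rho'}|_{\Gamma\ltimes\textbf{T}(F')\textbf{H}_V(F')}$.
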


Let us assume Proposition \ref{oddn1} for a moment, and  prove ($\star \star$) in our special case $n=1$, $m$ odd. As $s$ and $s'$ are not $1$, the first term of the virtual representations contribute nothing to $\tr \widetilde{\rho'} (\sigma^i, s')$ and $\tr \rho(s)$. But it is clear that $\omega \psi$ and $\widetilde{\omega' \psi'}$ verify the Shintani relation for the lifting from $\textbf{T}(F) \textbf{Z}_V(F)$ to $\Gamma \ltimes \textbf{T}(F') \textbf{Z}_V(F')$. By  Lemma \ref{lemmaofGyjo}, it follows that the second terms have equal contribution, which gives ($\star \star$). \\

Let us now  prove Proposition \ref{oddn1}:
we remark that $\nu'$ has positive dimension and that $\tr\nu'(\sigma)= q^m=\tr\widetilde{\rho'}(\sigma)$. The following lemma then shows that $\nu'$ is an irreducible representation. By Proposition \ref{sl2}, with $F'$ as a base field, we see that $\nu'$ is an extension of $\widetilde{\rho'}|_{\textbf{T}(F')\textbf{H}_V(F')}$, and Proposition \ref{oddn1} follows from the equality of traces at $\sigma$.
\begin{lemma}
$\langle \nu', \nu' \rangle=1.$
\end{lemma}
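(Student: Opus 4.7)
The plan is to reduce $\langle\nu',\nu'\rangle$ to a sum of $m$ inner products of irreducible representations over the intermediate fields $F_{d_i}$ (with $d_i=\gcd(i,m)$, $0\le i\le m-1$), using Gyoja's orthogonal decomposition from Lemma \ref{lemmaofGyjo} (ii). Combined with the isometry of Lemma \ref{lemmaofGyjo} (i), that decomposition yields
\[
\langle\nu',\nu'\rangle_{\Gamma\ltimes \textbf{T}(F')\textbf{H}_V(F')} \;=\; \frac{1}{m}\sum_{i=0}^{m-1}\langle\nu'_i,\nu'_i\rangle_{\textbf{T}(F_{d_i})\textbf{H}_V(F_{d_i})},
\]
where $\nu'_i\in\mathcal{C}(\textbf{T}(F_{d_i})\textbf{H}_V(F_{d_i}))$ is the preimage of $\nu'|_{\sigma^i\ltimes\,\cdot}$ under $\mathcal{N}_{i,t}$. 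My goal is to identify each $\nu'_i$ with the virtual character $\nu_{F_{d_i}}$ given by Proposition \ref{sl2} applied with base field $F_{d_i}$, which equals $\rho_{d_i}|_{\textbf{T}(F_{d_i})\textbf{H}_V(F_{d_i})}$.

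To make this identification I would first verify that the two building blocks of $\nu'$ lift individually, in Shintani's sense, to their analogs over $F_{d_i}$. For $\widetilde{\omega'\psi'}$ the relation is short: $\omega'$ is the unique order--$2$ character of the cyclic group $\textbf{T}(F')$, hence factors as $\omega_{d_i}\circ\nnn_{F'/F_{d_i}}$ (this composite being a nontrivial order--$2$ character by surjectivity of the norm), and $\psi'=\psi_{d_i}\circ\tr_{F'/F_{d_i}}$ is tautological. For $\widetilde{\rho'}|_{\Gamma\ltimes \textbf{H}_V(F')}$ one argues in the Schr\"odinger model: the twisted trace $\tr\widetilde{\rho'}(\sigma^i,(v,z))$ vanishes for $v\neq 0$ by a direct fixed--point count of $I_{\sigma^i}$ composed with translation by $v$, while on the centre $\tr\widetilde{\rho'}(\sigma^i,(0,z))=q^{nd_i}\psi'(z)=\tr\rho_{d_i}(\nnn_{i,t}(\sigma^i,(0,z)))$. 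By Lemma \ref{lemmaofGyjo} (iii), these Shintani relations are preserved by induction, so the $\sigma^i$-components of $A=\Ind(\widetilde{\rho'}|_{\Gamma\ltimes \textbf{H}_V(F')})$ and $B=\Ind(\widetilde{\omega'\psi'})$ correspond respectively to $\Ind_{\textbf{H}_V(F_{d_i})}^{\textbf{T}(F_{d_i})\textbf{H}_V(F_{d_i})}(\rho_{d_i}|_{\textbf{H}_V(F_{d_i})})$ and $\Ind_{\textbf{T}(F_{d_i})\textbf{Z}_V(F_{d_i})}^{\textbf{T}(F_{d_i})\textbf{H}_V(F_{d_i})}(\omega_{d_i}\psi_{d_i})$; their difference is exactly $\nu_{F_{d_i}}$.

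Finally, Proposition \ref{sl2} applied over $F_{d_i}$ gives $\nu_{F_{d_i}}=\rho_{d_i}|_{\textbf{T}(F_{d_i})\textbf{H}_V(F_{d_i})}$, and this restriction is irreducible: it is one of $|\textbf{T}(F_{d_i})|$ distinct extensions of the irreducible Heisenberg representation from $\textbf{H}_V(F_{d_i})$ to $\textbf{T}(F_{d_i})\textbf{H}_V(F_{d_i})$, each of dimension $\dim\rho_{d_i}$. Hence $\langle\nu_{F_{d_i}},\nu_{F_{d_i}}\rangle=1$ for every $i$, and the displayed identity yields $\langle\nu',\nu'\rangle=\frac{1}{m}\cdot m\cdot 1=1$. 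The main obstacle will be the Shintani relation for $\widetilde{\rho'}|_{\Gamma\ltimes \textbf{H}_V(F')}$: although morally straightforward since Heisenberg characters are supported on the centre, the off--centre vanishing for the twisted element $(\sigma^i,(v,z))$ requires a careful fixed--point calculation with $I_{\sigma^i}$ on $\C[\textbf{X}^\star(F')]$.
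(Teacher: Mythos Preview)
Your argument is correct and follows the same route as the paper: decompose $\langle\nu',\nu'\rangle$ over the cosets $\sigma^i\ltimes\textbf{T}(F')\textbf{H}_V(F')$, invoke the isometry of Lemma~\ref{lemmaofGyjo}~(i) on each coset, identify the descent of $\nu'$ on the $\sigma^i$-coset with $\rho_{d_i}|_{\textbf{T}(F_{d_i})\textbf{H}_V(F_{d_i})}$, and conclude from its irreducibility. The paper compresses all of this into four displayed lines, simply writing the descended class function as ``$\rho_{(m,i)}$'' without justification; you have supplied the missing verification that the two inducing data $\widetilde{\rho'}|_{\Gamma\ltimes\textbf{H}_V(F')}$ and $\widetilde{\omega'\psi'}$ separately satisfy the Shintani relation (via Lemma~\ref{lemmaofGyjo}~(iii)), and then invoked Proposition~\ref{sl2} over each $F_{d_i}$. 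That last step is legitimate here precisely because $m$ is odd, so every $d_i\mid m$ is odd, $E\otimes_F F_{d_i}$ remains a field, and $\textbf{T}$ stays anisotropic over $F_{d_i}$; it is worth saying this explicitly. Your flagged ``main obstacle'' --- the twisted Heisenberg trace --- is indeed a routine fixed-point count for $I_{\sigma^i}$ composed with a translation on $\C[\textbf{X}^\star(F')]$, and needs no more than what you sketched.
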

\begin{proof}
By Lemma \ref{lemmaofGyjo},  $\langle \nu', \nu' \rangle=\frac{1}{|\Gal(F'/F) \textbf{T}(F')\textbf{H}(F')|}\sum_{i=0}^{m-1} \sum_{A \in \textbf{T}(F')\textbf{H}(F')} \nu'\Big( (\sigma^i, A)\Big) \overline{\nu'\Big( (\sigma^i, A)\Big)}$

$=\frac{1}{|\Gal(F'/F) \textbf{T}(F')\textbf{H}(F')|} \sum_{i=0}^{m-1}| \sigma^i \ltimes \textbf{T}(F') \textbf{H}(F')| \langle \mathcal{N}_{i, t}( \nu'),  \mathcal{N}_{i, t}( \nu')\rangle$

$=\frac{1}{|\Gal(F'/F) \textbf{T}(F')\textbf{H}(F')|} \sum_{i=0}^{m-1} |\sigma^i \ltimes \textbf{T}(F')\textbf{H}(F')| \langle \mathcal{N}_{i, t}( \nu'), \mathcal{N}_{i, t} (\nu')\rangle$

$=\frac{1}{|\Gal(F'/F) \textbf{T}(F')\textbf{H}(F')|} \sum_{i=0}^{m-1} |\sigma^i \ltimes \textbf{T}(F')\textbf{H}(F')| \langle \rho_{(m,i)}, \rho_{(m,i)}\rangle=1.$
\end{proof}
Now, we assume that $m$ is even, still with $n=1$; then $E \otimes_F F'$ splits as $F' \oplus F'$, so that $\textbf{T}(F')$ is isomorphic to $F'^{\times}$. As before, we let $\omega'$ be the order $2$ character of $\textbf{T}(F')$, which is again  $\omega$ composed with the norm map from $\textbf{T}(F')$  to $\textbf{T}(F)$. Now we write $\eta$ for the order $2$ character of $\Gamma$, and extend as before $\omega' \psi'$ to a character of $\Gamma\ltimes \textbf{T}(F') \textbf{Z}_V(F')$ trivial on $\Gamma$.
\begin{proposition}
Assume $n=1$ and $m$ even. Then the virtual representation
$$\nu'= -\Ind_{\Gamma \ltimes \textbf{H}_V(F')}^{\Gamma \ltimes \textbf{T}(F') \textbf{H}_V(F')} \big( \widetilde{\rho'}|_{\Gamma \ltimes \textbf{H}_V(F')}\big) + \Ind_{\Gamma \times \textbf{T}(F') \textbf{Z}_V(F')}^{\Gamma \ltimes \textbf{T}(F') \textbf{H}_V(F')} \widetilde{\omega' \psi'}$$
is the restriction of $\eta \widetilde{\rho'}$ to $\Gamma \ltimes \textbf{T}(F') \textbf{H}_V(F')$.
\end{proposition}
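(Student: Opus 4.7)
The plan is to mirror the argument of Proposition~\ref{oddn1}, adjusting for the fact that when $m$ is even the torus $\textbf{T}$ splits over $F'$, so $\textbf{T}(F')\cong F'^{\times}$; this splitting is precisely what forces the $\eta$--twist in the statement.

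First I would establish a split analogue of Proposition~\ref{sl2}, namely
\[
\rho'|_{\textbf{T}(F')\textbf{H}_V(F')} = -\Ind_{\textbf{H}_V(F')}^{\textbf{T}(F')\textbf{H}_V(F')}\bigl(\rho'|_{\textbf{H}_V(F')}\bigr) + \Ind_{\textbf{T}(F')\textbf{Z}_V(F')}^{\textbf{T}(F')\textbf{H}_V(F')}(\omega'\psi').
\]
A direct trace computation from formula~(\ref{representationsp3}) shows that, for the split torus, $\rho'|_{\textbf{T}(F')}$ equals $\bigl(\sum_{\chi}\chi\bigr) + \omega'$, the opposite sign to the non-split case used in \cite{Gero}. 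Feeding this into the argument of Proposition~\ref{sl2}, the multiplicity of $\omega'\varphi^{-1}$ in $\rho'|_{\textbf{T}(F')}$ is $2$ when $\varphi=1$ and $1$ otherwise, which flips the overall sign in the resulting formula. The above identity is exactly $\nu'|_{\textbf{T}(F')\textbf{H}_V(F')}$, so $\nu'$ is an extension of the irreducible representation $\rho'|_{\textbf{T}(F')\textbf{H}_V(F')}$.

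Next I would verify $\langle \nu', \nu'\rangle_{\Gamma\ltimes \textbf{T}(F')\textbf{H}_V(F')}=1$ by repeating essentially verbatim the computation of the lemma preceding Proposition~\ref{oddn1}: Lemma~\ref{lemmaofGyjo}(i)--(ii) decomposes the inner product as a sum over $i=0,\dots,m-1$ of inner products on the fixed-point groups $\textbf{T}(F_{(m,i)})\textbf{H}_V(F_{(m,i)})$, each equal to $1$. A dimension count gives $\dim\nu' = -(q^m-1)q^m + q^{2m} = q^m > 0$, so $\nu'$ is a genuine (positive) irreducible representation.

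Finally, by Clifford theory the only irreducible extensions of $\rho'|_{\textbf{T}(F')\textbf{H}_V(F')}$ to $\Gamma\ltimes \textbf{T}(F')\textbf{H}_V(F')$ are $\widetilde{\rho'}$ and its twist $\eta\widetilde{\rho'}$, so it suffices to compare their traces at $\sigma$. By coset analysis as in Proposition~\ref{inductionproposition}, only $\sigma$-fixed coset representatives contribute: for the first induction these are $t'\in\textbf{T}(F)$, contributing $|\textbf{T}(F)|\cdot\tr\widetilde{\rho'}(\sigma) = (q+1)q$; for the second they are $v\in\textbf{V}(F)$, contributing $|\textbf{V}(F)| = q^2$. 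Thus $\tr\nu'(\sigma) = -(q+1)q + q^2 = -q = \eta(\sigma)\tr\widetilde{\rho'}(\sigma)$, pinning down $\nu' = \eta\widetilde{\rho'}|_{\Gamma\ltimes\textbf{T}(F')\textbf{H}_V(F')}$. The main obstacle is the split analogue of Proposition~\ref{sl2}: the parity of $m$ controls both the sign of $\omega'$ in the decomposition of $\rho'|_{\textbf{T}(F')}$ and the existence of the order-$2$ character $\eta$ of $\Gamma$, and making this link transparent is the only genuinely new ingredient beyond the odd-$m$ argument.
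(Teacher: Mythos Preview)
Your argument is essentially the paper's own: show $\langle\nu',\nu'\rangle=1$ and $\dim\nu'>0$, identify the restriction to $\textbf{T}(F')\textbf{H}_V(F')$ with $\rho'$, and then pin down the extension by computing $\tr\nu'(\sigma)=-q$; the paper obtains the trace via Lemma~\ref{lemmaofGyjo} rather than your direct coset count, but the two computations are equivalent, and your explicit split analogue of Proposition~\ref{sl2} is exactly what the paper's terse ``by Proposition~\ref{sl2}'' is implicitly invoking. One small slip: Clifford theory gives $m$ extensions $\chi\widetilde{\rho'}$ (one per character $\chi$ of $\Gamma$), not two, but your trace comparison at $\sigma$ still singles out $\chi=\eta$ uniquely, so the conclusion stands.
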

\begin{proof}
By construction, $\dim \nu'=q^m$ and from Lemma \ref{lemmaofGyjo}, we get $\tr\nu'(\sigma)=-q$. The following lemma, proved as above, then shows that $\nu'$ is irreducible. By Proposition \ref{sl2}, it is an extension of $\widetilde{\rho'}|_{\textbf{T}(F')\textbf{H}_V(F')}$, which has to be  $\eta \widetilde{\rho'}$ since $\tr\nu'(\sigma)=-q$.
\end{proof}
\begin{lemma}
$\langle \nu', \nu'\rangle=1$.
\end{lemma}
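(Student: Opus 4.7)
The plan is to mimic the odd-$m$ argument of the preceding lemma. By Lemma \ref{lemmaofGyjo}(i),
$$\langle \nu',\nu'\rangle \;=\; \frac{1}{m}\sum_{i=0}^{m-1}\langle \chi_i,\chi_i\rangle_{F_d},$$
where $d=\gcd(i,m)$ and $\chi_i$ is the unique class function on $\textbf{T}(F_d)\textbf{H}_V(F_d)$ satisfying $\mathcal{N}_{i,t}(\chi_i)=\nu'|_{\sigma^i\ltimes\textbf{T}(F')\textbf{H}_V(F')}$. It is thus enough to show that $\langle\chi_i,\chi_i\rangle_{F_d}=1$ for every $i$.

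I would identify $\chi_i$ term by term using Lemma \ref{lemmaofGyjo}(iii), under which $\mathcal{N}_{i,t}$ commutes with induction. The first summand of $\nu'$ corresponds to $-\Ind_{\textbf{H}_V(F_d)}^{\textbf{T}(F_d)\textbf{H}_V(F_d)}(\rho_d|_{\textbf{H}_V(F_d)})$ once one checks the Heisenberg Shintani identity $\tr\widetilde{\rho'}(\sigma^i,h)=\tr\rho_d(\nnn_{i,t}(\sigma^i,h))$ for $h\in\textbf{H}_V(F')$. This is not circular: the character of any Heisenberg representation vanishes off the centre, and on $\textbf{Z}_V(F')$ the Schr\"odinger model gives $\tr\widetilde{\rho'}(\sigma^i,z)=\psi'(z)\,q^d$, which equals $\psi_d(\tr_{F'/F_d}(z))\,q^d=\tr\rho_d(\nnn_{i,t}(\sigma^i,z))$ by transitivity of trace together with Remark (iii) of \S \ref{NM}. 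The second summand corresponds to $+\Ind_{\textbf{T}(F_d)\textbf{Z}_V(F_d)}^{\textbf{T}(F_d)\textbf{H}_V(F_d)}(\omega_d\psi_d)$, where $\omega_d$ is the order-two character of $\textbf{T}(F_d)$: on the abelian group $\textbf{T}(F')\textbf{Z}_V(F')$ Gyoja's norm is the usual norm, and transitivity yields $\omega'=\omega_d\circ\nnn_{T(F')/T(F_d)}$ and $\psi'=\psi_d\circ\tr_{F'/F_d}$.

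Combining, $\chi_i=-\Ind_{\textbf{H}_V(F_d)}^{\textbf{T}(F_d)\textbf{H}_V(F_d)}(\rho_d|_{\textbf{H}_V(F_d)})+\Ind_{\textbf{T}(F_d)\textbf{Z}_V(F_d)}^{\textbf{T}(F_d)\textbf{H}_V(F_d)}(\omega_d\psi_d)$, which by Proposition \ref{sl2} applied with $F_d$ as base field equals $-\rho_d|_{\textbf{T}(F_d)\textbf{H}_V(F_d)}$. The restriction $\rho_d|_{\textbf{H}_V(F_d)}$ is the irreducible Heisenberg representation, so any $\textbf{T}(F_d)\textbf{H}_V(F_d)$-invariant subspace is already $\textbf{H}_V(F_d)$-invariant and hence is $0$ or the whole space; thus $\rho_d|_{\textbf{T}(F_d)\textbf{H}_V(F_d)}$ is irreducible and $\langle\chi_i,\chi_i\rangle_{F_d}=1$. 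Summing over $i$ gives $\langle\nu',\nu'\rangle=1$. The main delicate point is the Heisenberg Shintani identity, but it reduces to a direct central-element trace computation; the extra sign in the even-$m$ case is handled automatically because the $-$ in front of the first summand of $\nu'$ matches the sign produced by Proposition \ref{sl2}.
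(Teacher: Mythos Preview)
Your approach is the same as the paper's (which simply says ``proved as above''), and the overall structure is right: descend $\nu'$ through $\mathcal{N}_{i,t}$, identify the result as $\pm\rho_d|_{\textbf{T}(F_d)\textbf{H}_V(F_d)}$, and use irreducibility. There is, however, a genuine gap in your appeal to Proposition~\ref{sl2} ``with $F_d$ as base field''.

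Proposition~\ref{sl2} is stated and proved under the hypothesis that $\textbf{T}$ is a maximally \emph{elliptic} torus, i.e.\ that $E\otimes_F F_d$ is a field; its proof uses G\'erardin's fact that $\rho|_{\textbf{T}(F)}$ is the sum of all characters of $\textbf{T}(F)$ \emph{except} $\omega$. Since $m$ is even here, some $d=\gcd(i,m)$ are even, and for those $d$ the torus $\textbf{T}$ \emph{splits} over $F_d$ (indeed $\textbf{T}(F_d)\simeq F_d^{\times}$). In that split case a direct computation in the Schr\"odinger model (formula~(\ref{representationsp3})) shows that $\rho_d|_{\textbf{T}(F_d)}$ is $\omega_d$ plus the regular representation of $F_d^{\times}$, so $\omega_d$ occurs with multiplicity $2$ rather than $0$. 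Repeating the multiplicity count in the proof of Proposition~\ref{sl2} then yields
\[
\Ind_{\textbf{H}_V(F_d)}^{\textbf{T}(F_d)\textbf{H}_V(F_d)}\big(\rho_d|_{\textbf{H}_V(F_d)}\big)-\Ind_{\textbf{T}(F_d)\textbf{Z}_V(F_d)}^{\textbf{T}(F_d)\textbf{H}_V(F_d)}(\omega_d\psi_d)\;=\;-\,\rho_d|_{\textbf{T}(F_d)\textbf{H}_V(F_d)},
\]
with the opposite sign from the elliptic case. Consequently your descent actually gives $\chi_i=+\rho_d$ when $d$ is even, not $-\rho_d$ as you assert. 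This does not damage the conclusion, since $\langle\pm\rho_d,\pm\rho_d\rangle=1$ either way, but you must treat the two parities of $d$ separately rather than invoke Proposition~\ref{sl2} uniformly. (Your verification of the Heisenberg Shintani identity is fine; the vanishing off the centre on the extended side follows from Corollary~5.2 once one notes that the $\textbf{V}$-component of the Gyoja norm of $(\sigma^i,h)$ is $\tr_{F'/F_d}$ of the $\textbf{V}$-component of $h$.)
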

We can now prove ($\star \star$) when $n=1$ and $m$ is even. The proof is  as above taking signs in account; the first terms in $\tr
\eta \nu' (\sigma^i, s')$ and $\tr\nu(s)$ contribute nothing, and the second terms are equal, because $\eta(\sigma^i)=-1.$

\section{End of proof}\label{EOP}
We are now ready to prove the formula ($\star \star$) for a general $n \geq 1$ (we keep the notation and assumptions of \S \ref{reduction}). We proceed  by a kind of reduction to \S \ref{TCOS}, and the problem is rather a matter of careful book-keeping.\\

We can see $V$ as a vector space over $E_+$. Endowing it with the form $\delta=\tr_{E/E_+} \varphi$, we get a symplectic vector space of dimension $2$ over $E_+$, for which we write $W$; with it comes a Heisenberg group $\textbf{H}_W$ with $\textbf{H}_W(E_+)= \textbf{W}(E_+) \oplus E_+$ as sets, and there is  an obvious morphism from $\textbf{H}_W(E_+)$ to $\textbf{H}_V(F)$ which is identity on $\textbf{W}(E_+)=\textbf{V}(F)$ and is given by $\tr_{E_+/F}$ on $\textbf{Z}_W(E_+)$. On the other hand $\textbf{Sp}_W(E_+)$ is obviously  a subgroup of $\textbf{Sp}_V(F)$. That gives a morphism $r$ from $\textbf{Sp}_W(E_+)\textbf{H}_W(E_+)$ to $\textbf{Sp}_V(F) \textbf{H}_V(F)$. If we write $\rho_{W}$ for the Weil representation  of $\textbf{Sp}_W(E_+)\textbf{H}_W(E_+)$ associated with $\psi_{E_+}=\psi \circ \tr_{E_+/F}$, we know  [ \cite{Gero},  2.6] that $\rho\circ r$ is isomorphic to $\rho_W$. As $\textbf{T}(F)$ is included  in $\textbf{Sp}_W(E_+)$, we can work with $\rho_W|_{\textbf{T}(F) \textbf{H}_W(E_+)}$ rather than $\rho|_{\textbf{T}(F) \textbf{H}_V(F)}$. Similarly, we want to express $\widetilde{\rho'}|_{\Gamma \ltimes \textbf{T}(F') \textbf{H}_V(F')}$ in terms of Weil representations attached to $2$-dimensional symplectic spaces.\\

Let $e$ be   the   greatest common divisor of  $m$ and $n$;     then $E_+ \otimes_FF'$ splits as the direct sum of $e$ fields $E_+^{\alpha}$, each of  degree $m/e$ over $E_+$ and $n/e$ over $F'$. The group $\Gamma$ permutes the factors transitively, and the stabilizer of each factor is generated by $\sigma^e$; more precisely, $\sigma$ induces an $E_+$-linear isomorphism  of $E_+^{\alpha}$ to $(E_+^{\alpha })^{\sigma}$, and $\sigma^e$ gives  a generator of $\Gal(E_+^{\alpha}/E_+)$ for each $\alpha$. Note however that $\sigma^e$ is not in general the Frobenius automorphism of $E_+^{\alpha}/E_+$; that will not cause any problem with norms, nevertheless, because $\textbf{T}$ is commutative.\\

Now $\textbf{W}(E_+\otimes_FF')$ is endowed with a $E_+ \otimes_F F'$-bilinear  symplectic form( obtained from $\delta$ by scalar extension); it splits as a direct sum of spaces $W_{\alpha}$; each has dimension $2$ over $E_+^{\alpha}$ and carries the $E_{+}^{\alpha}$-bilinear  symplectic form $\delta_{\alpha}$ obtained  from $\delta$ by scalar extension  from $E_+$ to $E_+^{\alpha}$.  The symplectic space $\textbf{W}(E_+ \otimes_F F')$ is the orthogonal direct  sum  of the symplectic subspaces  $W_{\alpha}$. Endowed with  the $F'$-bilinear symplectic form $\tr_{E_+^{\alpha}/F'}(\delta_{\alpha})$, $W_{\alpha}$  is a symplectic vector space $V_{\alpha}$  over $F'$, and  $\textbf{V}(F')$ is isomorphic to  the orthogonal direct sum of the $V_{\alpha}$'s.\\

Now for each $\alpha$, $\textbf{Sp}_{V_{\alpha}}(F')$ is a subgroup of $\textbf{Sp}_V(F')$ and we have a natural inclusion $\textbf{H}_{V_{\alpha}}(F') \longrightarrow \textbf{H}_V(F')$. Altogether, that gives a morphism $\prod_{\alpha}\textbf{Sp}_{V_{\alpha}}(F') \textbf{H}_{V_{\alpha}}(F')  \longrightarrow \textbf{Sp}_{V}(F') \textbf{H}_V(F')$ and it follows from [\cite{Gero}, 4.6] that the inflation of $\rho'$ through that morphism is the tensor product of the Weil representations $\rho_{\alpha}'$ (with respect to $\psi'$).\\

Similarly, through the natural morphism from $\textbf{Sp}_{W_{\alpha}}(E_+^{\alpha}) \textbf{H}_{W_{\alpha}}(E_+^{\alpha})$ to $\textbf{Sp}_{V_{\alpha}}(F')\textbf{H}_{V_{\alpha}}(F')$, $\rho_{\alpha}'$ gives the Weil representation $\rho^+_{\alpha}$  of $\textbf{Sp}_{W_{\alpha}}(E_+^{\alpha}) \textbf{H}_{W_{\alpha}}(E_+^{\alpha})$ attached to $\psi \circ \tr_{E_+^{\alpha}/F'}=\psi_{E_+} \circ \tr_{E_+^{\alpha}/E_+}$.\\

We now want to extend $\otimes_{\alpha} \rho_{\alpha}^+$ to a representation $R$ of  $\Gamma \ltimes \big( \prod_{\alpha} \textbf{Sp}_{W_{\alpha}}(E_+^{\alpha}) \textbf{H}_{W_{\alpha}}(E_+^{\alpha})\big)$ giving trace $q^n$ to $\sigma$. We use tensor induction for that. More precisely, enumerate the $\alpha$'s as $E_+^{\alpha_0},
E_+^{\alpha_1}=[E_+^{\alpha_0}]^{\sigma}, \cdots, E_+^{\alpha_{e-1}}=[E_+^{\alpha_0}]^{\sigma^{e-1}}$(with $E_+^{\alpha^e}=[E_+^{\alpha_0}]^{\sigma^e}$) and fix a model for $\rho_{\alpha_0}^+$  on some space $X$, for example a Schr\"odinger model; extend that model uniquely  to a representation  $\widetilde{\rho^+_{\alpha}}$ of $\langle \sigma^e \rangle \ltimes \textbf{Sp}_{W_{\alpha}}(E_+^{\alpha})\textbf{H}_{W_{\alpha}}(E_+^{\alpha})$, so that $\sigma^e$ has trace $q^n$ (here $\langle \sigma^e \rangle$ denotes  the subgroup of $\Gamma$ generated by $\sigma^e$).  Then there is a unique action of $\Gamma \ltimes \big(\prod_{\alpha} \textbf{Sp}_{W_{\alpha}}(E^{\alpha}_+) \textbf{H}_{W_{\alpha}}(E^{\alpha}_+)\big)$ on $X_0 \otimes X_1 \otimes \cdots \otimes X_{e-1}$, with $X_i= X$ for $i=0, \cdots e-1$, such that
\begin{itemize}
\item[(1)]  For $i=0, \cdots ,  e-1$, $\textbf{Sp}_{W_{\alpha_i}}(E_+^{\alpha_i}) \textbf{H}_{W_{\alpha_i}}(E_+^{\alpha_i})$ acts only on the factor $X_i$ via  $\rho_{\alpha_0}^+\circ \sigma^{-i}$.
\item[(2)]  $\sigma$ acts  by sending  $x_0 \otimes \cdots x_{e-1}$ to $\Bigg( \sigma^e(x_{e-1}) \otimes x_0 \otimes \cdots x_{e-2}\Bigg)$.
\end{itemize}

Clearly  the trace of $\sigma$  on that representation is the trace of  $\sigma^e$  on $X$ i.e. $q^n$, and the restriction  to $\prod_{\alpha} \textbf{Sp}_{W_{\alpha}}(E_+^{\alpha}) \textbf{H}_{W_{\alpha}}(E_+^{\alpha})$ is isomorphic to the product of $\rho_+^{\alpha}$. It follows that if we inflate $\widetilde{\rho'}$ via  the natural homomorphism  from  $\Gamma \ltimes (\prod_{\alpha} \textbf{Sp}_{W_{\alpha}}(E_+^{\alpha}) \textbf{H}_{W_{\alpha}}(E_+^{\alpha}))$ to $\Gamma \ltimes \textbf{Sp}_{V}(F') \textbf{H}_{V}(F')$, then get a representation isomorphic to $R$. \\

Now let us return to our element $s$  in $\textbf{T}(F) =E^1 \subseteq E^{\times}$; we rather see $\textbf{T}$ as a maximally elliptic  torus $\textbf{S}$ of $\textbf{Sp}_W$, so that $s $ is an element of $\textbf{Sp}_W(E_+)$; the symplectic vector space $W_{\alpha_0}$ is obtained from $W$ by scalar extension from $E_+$ to $E_+^{\alpha_0}$, and there is an element $s_0'$ of $\textbf{S}(E_+^{\alpha_0})$ with norm $s$ to $\textbf{S}(E_+)=E^1$. Now consider the element $s'=(s_0', 1, \cdots ,1)$ of $\textbf{S}(E_+ \otimes F')= \prod_{j=0}^{e-1} \textbf{S}(E_+^{\alpha_j})$; then we have $\nnn_{i,t}(\sigma^i, s')=s$ (norm from $\textbf{T}(F') $ to $\textbf{T}(F)$). But from \S \ref{TCOS}, treating the case where $n$ is $1$, we see that
$$\tr \widetilde{\rho_{\alpha_0}^+} (\sigma^{ie}, s'_0)=\tr\widetilde{\rho_{W}}(s).$$
From the construction of $R$, we see that
$$\tr R(\sigma^i, s')= \tr \widetilde{\rho_{\alpha_0}^+}(\sigma^{ie}, s_0'),$$
and  the result follows from \S \ref{TCOS}.
\begin{remark}
Clearly, the considerations of this section  have to do with  the behaviour of Gyoja's norm maps with respect to restriction of scalars. As our concern is more immediate, we have refrained  from developing that aspect along the lines of \cite{Dign2}.
\end{remark}

\end{document}